\documentclass[11pt]{amsart}
\usepackage[T1]{fontenc}
\usepackage[utf8]{inputenc}
\usepackage{lmodern}
\usepackage{microtype}
\usepackage{xcolor}
\usepackage{amsmath,amssymb,amsthm,mathtools}
\usepackage{enumitem}
\usepackage[margin=1in]{geometry}
\usepackage{hyperref}
\hypersetup{colorlinks=true,linkcolor=blue,citecolor=blue,urlcolor=blue}

\theoremstyle{plain}
\newtheorem{theorem}{Theorem}[section]
\newtheorem{lemma}[theorem]{Lemma}
\newtheorem{proposition}[theorem]{Proposition}
\newtheorem{corollary}[theorem]{Corollary}
\theoremstyle{definition}

\theoremstyle{remark}
\newtheorem{remark}[theorem]{Remark}

\DeclareMathOperator{\End}{End}
\DeclareMathOperator{\chr}{char}
\newcommand{\FF}{\mathbb{F}}
\newcommand{\KK}{\mathbb{K}}
\newcommand{\PP}{\mathbb{P}}
\newcommand{\ZZ}{\mathbb{Z}}
\newcommand{\JJ}{\mathcal{J}}
\newcommand{\cA}{\mathcal{A}}
\DeclareMathOperator{\GL}{GL}
\numberwithin{equation}{section}

\title[Jordan rigidity of full matrix algebras]{Jordan rigidity of full matrix algebras}

\author{Ilja Gogi\'{c}}
\address{I.~Gogi\'{c}, Department of Mathematics, Faculty of Science, University of Zagreb, Bijeni\v{c}ka 30, 10000 Zagreb, Croatia}
\email{ilja@math.hr}

\author{Matija Kazalicki}
\address{M.~Kazalicki, Department of Mathematics, Faculty of Science, University of Zagreb, Bijeni\v{c}ka 30, 10000 Zagreb, Croatia}
\email{matija.kazalicki@math.hr}

\author{Mateo Toma\v{s}evi\'{c}}
\address{M.~Toma\v{s}evi\'{c}, Department of Mathematics, Faculty of Science, University of Zagreb, Bijeni\v{c}ka 30, 10000 Zagreb, Croatia}
\email{mateo.tomasevic@math.hr}

\subjclass[2020]{17C50, 17C27, 17C20, 16W10, 16S50, 39B52}

\keywords{Jordan rigidity, Jordan multiplicative maps, automatic additivity,
	full matrix algebras, Jordan rings, Peirce decomposition}

\date{\today}

\begin{document}
	
\begin{abstract}
	Let $\mathbb{F}$ be a field of characteristic different from $2$, and let
	$M_n(\mathbb{F})^+$ denote the Jordan algebra of all $n\times n$ matrices
	over $\mathbb{F}$ with product $X\circ Y:=(XY+YX)/2$. We prove a rigidity
	theorem for $M_n(\mathbb{F})^+$, $n\ge2$: if $\mathcal{J}$ is any $2$-torsion-free Jordan ring and $\phi:M_n(\mathbb{F})^+\to\mathcal{J}$ is a Jordan multiplicative (product-preserving) map, then $\phi(0)$ is an idempotent and $X\mapsto\phi(X)-\phi(0)$ is either zero or an injective Jordan ring homomorphism. Thus, up to an idempotent constant, preservation of the Jordan product alone forces additivity and the zero-or-injective dichotomy. When specialized to associative codomains, the theorem yields the Jacobson--Rickart decomposition into homomorphic and antihomomorphic parts. In particular, for maps $M_n(\mathbb F)^+\to M_k(\mathbb K)^+$, where $\mathbb K$ is a field of characteristic different from $2$, we also obtain a block normal form governed by finite-dimensional $\mathbb K$-representations of $\mathbb F$, together with a criterion for the existence of nonconstant maps.
\end{abstract}
	
	\maketitle
	
	\section{Introduction}
	
	Jordan rings form an important class of nonassociative rings, closely related
	to associative rings but governed by their own structural theory. By a
	\emph{Jordan ring} we mean an additive abelian group $\JJ$ equipped with a
	biadditive commutative product $(x,y)\mapsto x\circ y$ satisfying the
	\emph{Jordan identity}
	\[
	x^2\circ(x\circ y)=x\circ(x^2\circ y),
	\qquad x,y\in\JJ,
	\]
	where $x^2:=x\circ x$. We say that a Jordan ring is
	\emph{$2$-torsion-free} if $2x=0$ implies $x=0$.
	
	Let $\ZZ[1/2]$ denote the ring of dyadic rationals. If $\cA$ is an associative
	$\ZZ[1/2]$-algebra, then $\cA$ gives rise to the corresponding
	\emph{special Jordan ring} $\cA^+$, with the same additive group and
	product
	\begin{equation}\label{eq:J-product}
		x\circ y:=\frac12(xy+yx),
		\qquad x,y\in\cA.
	\end{equation}
	A map $\phi:\JJ_1 \to \JJ_2$ between Jordan rings $\JJ_1$ and $\JJ_2$ is called
	\emph{Jordan multiplicative} if
	\begin{equation}\label{eq:JM}
		\phi(x\circ y)=\phi(x)\circ\phi(y),
		\qquad x,y\in \JJ_1.
	\end{equation}
	No additivity, scalar-linearity, or further compatibility condition is
	assumed.
	
	The standard morphisms between Jordan rings are the \emph{Jordan ring homomorphisms}, that is, additive maps satisfying \eqref{eq:JM}. In the special Jordan setting \eqref{eq:J-product}, where an underlying associative ring is present, ordinary ring homomorphisms and antihomomorphisms give the basic examples. A classical problem is to decide when every Jordan ring homomorphism is of one of these two types, or more generally splits into homomorphic and antihomomorphic parts. Foundational results in this direction are due to Jacobson--Rickart, Herstein, and Smiley \cite{JacobsonRickart,Herstein,Smiley}; for a recent survey, see \cite{BresarZelmanov}.
	
	A parallel line of research concerns automatic additivity. Here one starts with a
	product-preserving map which is not assumed to be additive, and tries to recover additivity from additional hypotheses, usually bijectivity or some form of nondegeneracy. A fundamental result in this direction is Martindale's theorem: every multiplicative bijection from a prime ring containing a nontrivial idempotent onto an arbitrary ring is additive \cite{Martindale}. Jordan-type versions of this problem have been studied, for example, in \cite{DolinarKuzmaStopar,GTmatrix,GTstructural,GTsemi,Ji,Lu,Molnar}.
	
	The present paper combines these two viewpoints for full matrix Jordan algebras. Let $\FF$ be a field of characteristic different from $2$, and let $M_n(\FF)$ be the algebra of all $n\times n$ matrices over $\FF$. We study Jordan multiplicative maps from $M_n(\FF)^+$ into arbitrary $2$-torsion-free Jordan rings. Unlike most existing automatic-additivity results, we impose neither bijectivity nor any nondegeneracy assumption on the map, and we impose no codomain hypothesis beyond $2$-torsion-freeness. This extends the self-map case treated in \cite{GTmatrix}, where automatic additivity and the
	explicit form of zero-preserving Jordan multiplicative self-maps of $M_n(\FF)^+$ were obtained. Our main result says that, for $n\ge2$, every such map becomes a Jordan ring
	homomorphism after subtracting its value at zero. Since $0\circ0=0$, this value is necessarily an idempotent. For an idempotent $e$ in a Jordan ring
	$\JJ$, set
	\[
	\JJ_0(e):=\{x\in\JJ:e\circ x=0\},
	\]
	the Peirce $0$-space of $e$. If $\JJ$ is $2$-torsion-free, then $\JJ_0(e)$ is a Jordan subring.
	
	\begin{theorem}[Main theorem]\label{thm:general}
		Let $\FF$ be a field with $\chr(\FF)\ne2$, let $n\ge2$, and let
		$\JJ$ be a $2$-torsion-free Jordan ring. A map
		$\phi:M_n(\FF)^+\to\JJ$ is Jordan multiplicative if and only if
		$e:=\phi(0)$ is an idempotent and there exists a Jordan ring homomorphism $\psi:M_n(\FF)^+\to\JJ_0(e)$ such that
		\[
		\phi(X)=e+\psi(X),
		\qquad X\in M_n(\FF).
		\]
		In this case, $\psi$ is either zero or injective. Consequently, $\phi$ is
		either constant idempotent-valued or injective.
	\end{theorem}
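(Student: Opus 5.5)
The converse direction is a direct check. If $e$ is idempotent and $\psi$ is a Jordan homomorphism into $\JJ_0(e)$, then
\[
(e+\psi(X))\circ(e+\psi(Y))=e+\psi(X)\circ\psi(Y)=e+\psi(X\circ Y),
\]
because $e\circ\psi(\cdot)=0$. For the forward direction we first record what $\phi(0)$ and its interaction with other values look like. Since $0\circ 0=0$, the element $e:=\phi(0)$ satisfies $e\circ e=e$. Next, $0\circ X=0$ gives $e\circ\phi(X)=e$ for every $X$. Put $\psi(X):=\phi(X)-e$. Then $e\circ\psi(X)=e-e=0$, so $\psi$ takes values in $\JJ_0(e)$. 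Expanding $\phi(X\circ Y)=\phi(X)\circ\phi(Y)$ then gives $\psi(X\circ Y)=\psi(X)\circ\psi(Y)$. Thus $\psi$ is Jordan multiplicative with $\psi(0)=0$, and $\JJ_0(e)$ is a Jordan subring by $2$-torsion-freeness. The whole problem therefore reduces to showing that a zero-preserving Jordan multiplicative map $\psi:M_n(\FF)^+\to\JJ$ is additive, and is either zero or injective.

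For additivity I will follow the Peirce strategy. Set $f_i:=\psi(E_{ii})$. These are orthogonal idempotents: $E_{ii}\circ E_{jj}=0$ for $i\ne j$ and $\psi(0)=0$. Using $E_{ii}\circ X$ and the Jordan triple product, which is preserved because $\{a,b,c\}$ is expressible through $\circ$, we get that $\psi$ maps the Peirce space $(M_n)_{ij}$ of the system $(E_{ii})$ into the Peirce space $\JJ_{ij}$ of the system $(f_i)$. The cases to handle are these.
\begin{enumerate}
\item[(a)] Additivity on each off-diagonal block $\FF E_{ij}+\FF E_{ji}$. Here I use identities such as
\[
(E_{ii}+xE_{ij})\circ(\cdots)
\]
and, for $u,v$ in the off-diagonal Peirce space, products of the form $u\circ(E_{ii}+E_{jj})$, which express $u+v$ as a Jordan product of elements on which $\psi$ is already controlled.
\item[(b)] Additivity across different Peirce components. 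For $X=\sum X_{ij}$, the projection identities $\psi(E_{ii}\circ X)$ and the triple products $\{E_{ii},X,E_{jj}\}$ recover each component $\psi(X)_{ij}=\psi(X_{ij})$, where $\psi(X)_{ij}$ denotes the Peirce component of $\psi(X)$ relative to $(f_i)$. Summing these requires showing that $\psi(X)$ has no component outside $\sum_{i,j}\JJ_{ij}$, i.e.\ that its Peirce-$0$ part with respect to $f=\sum f_i$ vanishes.
\item[(c)] Additivity in the scalar. Writing $\psi(\lambda E_{ij})=\sigma_{ij}(\lambda)$, I show $\sigma$ is additive and multiplicative via
\[
\lambda E_{ij}\circ \mu E_{ji}=\tfrac{\lambda\mu}{2}(E_{ii}+E_{jj}).
\]
\end{enumerate}
This is where $n\ge2$ is essential: with $n=1$ the maps $\lambda\mapsto\lambda^k$ are counterexamples.

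I expect the main obstacle to be the Peirce-$0$ issue in step (b). The difficulty is controlling the component of $\psi(X)$ in $\JJ_0(f)$ without any associative structure. I would first try to exploit invertibility: if $X$ is invertible, then $X\circ X^{-1}=I$ gives $\psi(X)\circ\psi(X^{-1})=\psi(I)=f$. For general $X$, I would use the rank-one decomposition together with $2$-torsion-freeness, which gives $\psi(X)=\psi(I\circ X)=f\circ\psi(X)$. In a $2$-torsion-free Jordan ring, $f\circ x=x$ forces $x\in\JJ_1(f)$ by the Peirce relations. Hence $\psi(X)\in\JJ_1(f)=\bigoplus_{i\le j}\JJ_{ij}$, which is exactly what step (b) needs.

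Finally, for the dichotomy: $\ker\psi:=\{X:\psi(X)=0\}$ is additive by the previous steps and closed under $X\mapsto X\circ Y$, so it is a Jordan ideal of $M_n(\FF)^+$. The Jordan algebra $M_n(\FF)^+$ is simple when $\chr\FF\ne2$, so the kernel is $0$ or everything. Thus $\psi$ is injective or zero, and correspondingly $\phi$ is injective or constant equal to $e$.
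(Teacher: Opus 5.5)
Your reduction to the zero-preserving case and your closing kernel/simplicity argument agree with the paper. The additivity argument, however, has a genuine gap at its crux. In the Peirce-$0$ step you write $\psi(X)=\psi(I\circ X)=f\circ\psi(X)$ with $f=\sum_i f_i=\sum_i\psi(E_{ii})$. Jordan multiplicativity only gives $\psi(X)=\psi(I)\circ\psi(X)$, and $\psi(I)=\sum_i\psi(E_{ii})$ is itself an additivity statement you have not established; the invertibility remark has the same problem. A priori $e_3:=\psi(I)-\sum_i f_i$ is an idempotent orthogonal to every $f_i$ that may well be nonzero. In that case $\JJ_1(\psi(I))$ contains the Peirce spaces attached to $e_3$, and your bookkeeping in (b) does not close.

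Killing $e_3$ is precisely the content of Proposition~\ref{prop:corner-relation}. Set $c=\psi(E_{12}+E_{21})$ and $d=\psi(2(E_{12}+E_{21}))$. Then $e_1\circ d=c=e_2\circ d$, which forces $c\in\JJ_{12}$. Hence $\psi(I_2)=c\circ c$ has zero $\JJ_{33}$-component, whereas that component of $e_1+e_2+e_3$ is $e_3$. Using the doubled element $d$ is what avoids any homogeneity assumption.

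Your remaining steps also rest on identities that are not available for a map not yet known to be additive. The triple product $\{a,b,c\}$ is built from $\circ$ together with sums and differences, so $\psi$ need not preserve it. Compositions of the $L_{E_{ii}}$ alone leave scalars trapped inside $\psi$. For $i\ne j$, in the notation of your step (b), you only get $\psi\bigl(\tfrac14(x_{ij}E_{ij}+x_{ji}E_{ji})\bigr)=f_i\circ(f_j\circ\psi(X))=\tfrac14\psi(X)_{ij}$. Passing from this to $\psi(X)_{ij}=\psi(x_{ij}E_{ij}+x_{ji}E_{ji})$ needs exactly the homogeneity you lack. In (c), the identity $\lambda E_{ij}\circ\mu E_{ji}=\tfrac{\lambda\mu}{2}(E_{ii}+E_{jj})$ yields at best a multiplicativity-type relation, never $\sigma_{ij}(\lambda+\mu)=\sigma_{ij}(\lambda)+\sigma_{ij}(\mu)$. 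Step (a) is not yet an argument.

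The paper bypasses all component-by-component work with Theorem~\ref{thm:GKT}. Take an $\FF$-linear $\Theta$ with $\Theta(E_{11})=X$ and $\Theta(E_{22})=Y$, and write it as $L_{A_m}\cdots L_{A_1}$. Then $\psi\circ\Theta=\Sigma\circ\psi$ with $\Sigma=L_{\psi(A_m)}\cdots L_{\psi(A_1)}$ additive. So the single relation $\psi(E_{11}+E_{22})=\psi(E_{11})+\psi(E_{22})$ becomes $\psi(X+Y)=\psi(X)+\psi(Y)$ (Lemma~\ref{lem:transport}).

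Finally, Peirce decompositions require $\tfrac12$. For a merely $2$-torsion-free $\JJ$, including your claim that $\JJ_0(e)$ is a subring, you must first pass to $\JJ[1/2]$ as in Lemma~\ref{lem:localization-peirce}.
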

	
	We call this phenomenon \emph{Jordan rigidity}: preserving the Jordan product
	already forces a Jordan ring homomorphism after the idempotent value at zero
	is removed, and simplicity of $M_n(\FF)^+$ makes that homomorphism either
	zero or injective.
	
	The assumption $n\ge2$ cannot be omitted. If $\chr(\FF)\ne2$, the map
	$\FF\to\FF$ sending $0$ to $0$ and every nonzero element to $1$ is
	zero-preserving and multiplicative, hence Jordan multiplicative for the
	one-dimensional Jordan algebra $\FF^+$, but it is not additive.
	
	The proof of Theorem~\ref{thm:general} is given in
	Section~\ref{sec:proof-main}. We first consider the case in which the
	codomain is a Jordan algebra over $\ZZ[1/2]$. After subtracting the value
	at zero, the problem reduces to a zero-preserving Jordan multiplicative
	map $\psi:M_n(\FF)^+\to\JJ$. A Peirce-space calculation in the upper-left
	copy of $M_2(\FF)^+$ gives the initial additive relation
	\[
	\psi(E_{11}+E_{22})=\psi(E_{11})+\psi(E_{22}),
	\]
	where $E_{11}$ and $E_{22}$ are the first two diagonal matrix units. The
	main external input is \cite[Theorem~1.1]{GKT}, which identifies the
	semigroup generated by the Jordan multiplication operators on
	$M_n(\FF)^+$ with $\End_\FF(M_n(\FF))$. This theorem transports the single
	relation above to arbitrary pairs of matrices, yielding additivity of
	$\psi$. The general $2$-torsion-free case follows by localizing the
	additive group at $2$; the same argument also shows that $\JJ_0(e)$ is a
	Jordan subring. Finally, once additivity is known, the zero-or-injective dichotomy follows from the Jordan-ring simplicity of $M_n(\FF)^+$.
	
	Section~\ref{sec:assoc-matrix} specializes the main result to associative
	codomains, and then to full matrix algebras. In this setting, the
	Jacobson--Rickart theorem yields a decomposition of the nonconstant part
	into homomorphic and antihomomorphic components.
	
	\begin{theorem}\label{thm:associative}
		Let $\FF$ be a field with $\chr(\FF)\ne2$, let $n\ge2$, let
		$\cA$ be a unital associative algebra over $\ZZ[1/2]$, and let
		$\phi:M_n(\FF)^+\to\cA^+$ be Jordan multiplicative. Set
		$e:=\phi(0)$ and $\psi:=\phi-e$. Then $e$ is an idempotent and
		$\psi$ takes its values in the associative corner $(1-e)\cA(1-e)$. Moreover, set $p:=\psi(I_n)$. Then there exists an idempotent $f$ in the
		associative subring of $p\cA p$ generated by $\psi(M_n(\FF))$, central in that
		subring, such that the map
		\[
		\theta:M_n(\FF)\to f\cA f, \qquad \theta(X) := f \psi(X)
		\]
		is a unital ring homomorphism, the map
		\[
		\eta:M_n(\FF)\to(p-f)\cA(p-f), \qquad \eta(X):=(p-f)\psi(X)
		\]
		is a unital ring antihomomorphism, and $\psi=\theta+\eta$. Consequently,
		$\phi=e+\theta+\eta$.
	\end{theorem}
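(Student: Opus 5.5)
We derive Theorem~\ref{thm:associative} from Theorem~\ref{thm:general} together with the classical Jacobson--Rickart theorem for Jordan homomorphisms out of full matrix rings.

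\emph{Step 1: $e$ is idempotent and $\psi$ lands in the corner.} Since $\cA$ is a $\ZZ[1/2]$-algebra, $\cA^+$ is a $2$-torsion-free Jordan ring, so Theorem~\ref{thm:general} applies. It gives that $e=\phi(0)$ is an idempotent and that $\psi=\phi-e$ is a Jordan ring homomorphism into $\JJ_0(e)=\{x: ex+xe=0\}$. For an associative idempotent $e$, the condition $ex+xe=0$ implies, after multiplying on the left by $e$, that $ex+exe=0$. Multiplying on the right by $e$ similarly gives $exe+xe=0$. Multiplying $ex+xe=0$ on both sides by $e$ gives $2exe=0$, so $exe=0$. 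Hence $ex=xe=0$, i.e.\ $x=(1-e)x(1-e)$. Thus $\psi$ takes values in $(1-e)\cA(1-e)$.

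\emph{Step 2: $\psi$ is a unital Jordan homomorphism onto its corner.} Put $p=\psi(I_n)$. This is an idempotent, since $\psi$ preserves squares and $I_n^2=I_n$. Applying $\psi$ to $I_n\circ X=X$ gives $p\circ\psi(X)=\psi(X)$. The Step~1 computation, applied with the Peirce-$1$ condition $px+xp=2x$, shows $x=pxp$. So $\psi:M_n(\FF)\to p\cA p$ is a Jordan ring homomorphism with $\psi(I_n)=p$, the unit of $p\cA p$.

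\emph{Step 3: apply Jacobson--Rickart.} Let $\mathcal B$ be the associative subring of $p\cA p$ generated by $\psi(M_n(\FF))$; it contains $p$. Since $M_n(\ZZ)\subseteq M_n(\FF)$ is a full matrix ring with $n\ge2$, and $\psi$ is additive and preserves squares, hence is a Jordan homomorphism in Jacobson--Rickart's sense, the Jacobson--Rickart theorem applies. The matrix units $E_{ij}$ and their images $\psi(E_{ij})$ form a system of matrix units in the Jordan sense. The standard construction sets
\[
f:=\sum_{i}\psi(E_{1i})\psi(E_{i1})\cdots
\]
More precisely, $f$ is built from the elements $\psi(E_{ij})\psi(E_{jk})-\psi(E_{ik})$, and the theorem yields a central idempotent $f\in\mathcal B$ such that $X\mapsto f\psi(X)$ is multiplicative and $X\mapsto (p-f)\psi(X)$ is antimultiplicative. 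Centrality of $f$ in $\mathcal B$ gives $f\psi(X)=f\psi(X)f\in f\cA f$, and similarly for $p-f$. Additivity is inherited from $\psi$. Unitality follows from $f\psi(I_n)=fp=f$ and $(p-f)p=p-f$. Then $\psi=p\psi=f\psi+(p-f)\psi=\theta+\eta$, and so $\phi=e+\theta+\eta$.

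\emph{Main obstacle.} The difficulty lies in Step~3. The Jacobson--Rickart theorem is classically stated for the matrix ring over an arbitrary unital ring, with the central idempotent in the ring generated by the image. One must check that the version used provides $f$ central in exactly the subring $\mathcal B$, and that the decomposition is valid on all of $M_n(\FF)$ and not only on the matrix units. The first thing to try is the Jacobson--Rickart argument itself. One verifies homomorphic or antihomomorphic behaviour on the matrix units $E_{ij}$ separately on the $f$ and $p-f$ pieces. One then extends to $aE_{ij}$ with $a\in\FF$ using the Jordan identities $2\psi(aE_{ij})=\psi(aE_{ii})\circ\ldots$, which hold because $\psi$ is additive and Jordan multiplicative. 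Finally, linearity over sums finishes the extension.
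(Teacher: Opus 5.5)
Your proposal is correct and follows essentially the same route as the paper. You use Theorem~\ref{thm:general} and identify the Peirce $0$- and $1$-spaces with associative corners, which gives $\psi\colon M_n(\FF)\to p\cA p$ unital. You then invoke Jacobson--Rickart's Theorem~7 (packaged in the paper as Corollary~\ref{cor:JR}) for the central idempotent $f$. Two blemishes are cosmetic. Apply Jacobson--Rickart directly to $M_n(\FF)$ with $\FF$ as the unital coefficient ring, since $M_n(\ZZ)\subseteq M_n(\FF)$ fails in positive characteristic and is not needed. And replace your garbled formula for $f$ by a citation of the construction in their proof, as the paper does.
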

	
	For matrix codomains, this decomposition can be made completely explicit. The homomorphic part is put into standard matrix-unit form using Morita equivalence for full matrix rings \cite[Section~17B]{LamLectures}, while the antihomomorphic part is treated in the same way after transposition.
	
	\begin{theorem}\label{thm:matrix}
		Let $\FF$ and $\KK$ be fields with characteristics different from $2$.
		Let $n\ge2$, $k\ge1$, and let $\phi:M_n(\FF)^+\to M_k(\KK)^+$ be
		Jordan multiplicative. Then there exist nonnegative integers $s,r,u,t$ with
		$s+nr+nu+t=k$, and an invertible matrix $S\in\GL_k(\KK)$, such that
		\begin{equation}\label{eq:normal}
			S\phi(X)S^{-1}
			=
			I_s\oplus\omega_n(X)\oplus\sigma_n(X^{\mathsf{T}})\oplus0_t,
			\qquad X\in M_n(\FF),
		\end{equation}
		where $X^{\mathsf T}$ denotes the transpose of $X$, and direct summands of
		size $0$ are omitted.
		\begin{itemize}
			\item If $r>0$, then $\omega:\FF\to M_r(\KK)$ is a unital ring
			homomorphism, and $\omega_n:M_n(\FF)\to M_n(M_r(\KK))\cong M_{nr}(\KK)$ is
			its $n$-fold amplification, given by
			$\omega_n([x_{ij}]):=[\omega(x_{ij})]$.
			\item If $u>0$, then $\sigma:\FF\to M_u(\KK)$ is a unital ring
			homomorphism, and $\sigma_n$ is defined analogously.
		\end{itemize}
		Conversely, every map of this form is Jordan multiplicative. Moreover,
		$\phi(0)=0$ if and only if $s=0$, and $\phi$ is unital if and only if
		$t=0$.
	\end{theorem}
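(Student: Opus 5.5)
I will derive Theorem~\ref{thm:matrix} from Theorem~\ref{thm:associative} applied with $\cA=M_k(\KK)$, which is a unital associative $\ZZ[1/2]$-algebra because $\chr(\KK)\ne2$. Theorem~\ref{thm:associative} gives $\phi=e+\theta+\eta$. Here $e$ is an idempotent, $p=\psi(I_n)$ is an idempotent with $pe=ep=0$, and $f$ is an idempotent below $p$ that commutes with $p$. The maps $\theta:M_n(\FF)\to fM_k(\KK)f$ and $\eta:M_n(\FF)\to(p-f)M_k(\KK)(p-f)$ are a unital ring homomorphism and a unital ring antihomomorphism respectively. The idempotents $e$, $f$, $p-f$ and $1-e-p$ are pairwise orthogonal and sum to $1$. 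Since commuting idempotents in $M_k(\KK)$ are simultaneously diagonalizable, I will first choose $S_0\in\GL_k(\KK)$ that conjugates them to $I_s\oplus0$, $0\oplus I_a\oplus0$, $0\oplus I_b\oplus 0$ and $0\oplus I_t$, with $s=\operatorname{rank}e$, $a=\operatorname{rank}f$, $b=\operatorname{rank}(p-f)$ and $t=k-s-a-b$. After this conjugation, $\theta$ becomes a unital ring homomorphism $M_n(\FF)\to M_a(\KK)$ and $\eta$ becomes a unital antihomomorphism $M_n(\FF)\to M_b(\KK)$. Both are additive, so they are ring maps in the usual sense; when $a=0$ or $b=0$ the corresponding summand is simply absent.

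The key step is the normal form for a unital ring homomorphism $\theta:M_n(\FF)\to M_a(\KK)$ with $a>0$. The images $F_{ij}:=\theta(E_{ij})$ form a complete system of matrix units in $M_a(\KK)$, since $\sum_i F_{ii}=1$. Let $W:=F_{11}\KK^a$. Then $\KK^a=\bigoplus_i F_{ii}\KK^a$, and each map $F_{i1}$ sends $W$ isomorphically onto $F_{ii}\KK^a$, with inverse $F_{1i}$. Hence $a=nr$ with $r=\dim_\KK W$. Fix a basis $w_1,\dots,w_r$ of $W$ and use the basis $\{F_{i1}w_l\}$ of $\KK^a$, ordered by $i$ first. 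In this basis $F_{ij}$ becomes $E_{ij}\otimes I_r$. This is the Morita step mentioned before the theorem.

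Next consider scalars. The element $\theta(xI_n)$ commutes with every $F_{ij}$, because $xI_n$ is central. In the new basis it therefore has the form $I_n\otimes\omega(x)$ for some $\omega(x)\in M_r(\KK)$, and $\omega:\FF\to M_r(\KK)$ is a unital ring homomorphism. Writing $X=\sum_{i,j}(x_{ij}I_n)E_{ij}$ then gives $\theta(X)=[\omega(x_{ij})]=\omega_n(X)$.

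For $\eta$, I will use that $X\mapsto\eta(X^{\mathsf T})$ is a unital ring homomorphism, since transposition is an anti-automorphism of $M_n(\FF)$. Applying the same argument yields $b=nu$ and a conjugation under which $\eta(X)=\sigma_n(X^{\mathsf T})$. Combining $S_0$ with the block-diagonal conjugations on the $f$-block and the $(p-f)$-block gives $S$ and establishes \eqref{eq:normal}. The only care needed is to avoid using $\FF$-linearity anywhere, since the maps are only additive and multiplicative; the centralizer argument for scalars handles this.

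For the converse, a map of the form \eqref{eq:normal} is a direct sum of block maps. The constant $I_s$ is idempotent-valued and satisfies $I\circ I=I$. The summand $\omega_n$ is a ring homomorphism and $X\mapsto\sigma_n(X^{\mathsf T})$ is an antihomomorphism, so both preserve $\circ$. The constant $0$ also preserves $\circ$. The Jordan product acts blockwise on block-diagonal matrices, so the whole map is Jordan multiplicative, and conjugation by $S$ preserves this.

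Finally, for the supplementary claims, $\omega_n(0)=0$ and $\sigma_n(0)=0$, so $\phi(0)\sim I_s\oplus0$, which vanishes exactly when $s=0$. Likewise $\phi(I_n)\sim I_s\oplus I_{nr}\oplus I_{nu}\oplus0_t$, which equals $I_k$ exactly when $t=0$.
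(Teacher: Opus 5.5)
Your proposal is correct and follows essentially the same route as the paper. You apply Theorem~\ref{thm:associative} with $\cA=M_k(\KK)$, split $\KK^k$ along the pairwise orthogonal idempotents $e$, $f$, $p-f$, $1-e-p$, and put $\theta$ and $X\mapsto\eta(X^{\mathsf T})$ into Morita normal form. The only difference is that you prove that normal form directly, via the matrix units $\theta(E_{ij})$ and the centralizer argument for $\theta(xI_n)$, whereas the paper cites it as the standard matrix-ring form of Morita equivalence.
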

	
	When specialized to self-maps over the same field, Theorem~\ref{thm:matrix}
	recovers the classification from \cite{GTmatrix}. In the general case, the
	new coefficients that appear in \eqref{eq:normal} are precisely the
	finite-dimensional $\KK$-representations of $\FF$. If
	$\chr(\FF)\ne\chr(\KK)$, no nonzero coefficient block can occur; hence every
	Jordan multiplicative map $M_n(\FF)^+\to M_k(\KK)^+$ is constant
	idempotent-valued. 
	
	Finally, if $\FF$ and $\KK$ have the same characteristic, let
	$\PP$ denote their common prime subfield. Then finite-dimensional unital
	representations of the $\PP$-algebra $\FF$ on $\KK$-vector spaces are
	equivalent to finite-dimensional unital modules over the commutative
	$\KK$-algebra $\KK\otimes_{\PP}\FF$, where $\KK$ acts through the first
	tensor factor. If $d_\KK(\FF)$ is the least $\KK$-dimension of a nonzero such
	module, with $d_\KK(\FF)=\infty$ when none exists, then nonconstant maps
	$M_n(\FF)^+\to M_k(\KK)^+$ exist exactly when $k\ge n\,d_\KK(\FF)$. We recall
	this description and compute $d_\KK(\FF)$ in the finite separable and
	finite-field cases.
	
	\section{Preliminaries}\label{sec:preliminaries}
	
	Throughout the paper, $\FF$ denotes a field with $\chr(\FF)\ne2$. For
	$n\ge1$, we denote by $I_n$ the identity matrix in $M_n(\FF)$ and by
	$E_{ij}$ the standard matrix units.
	
	A \emph{Jordan algebra over $\ZZ[1/2]$} is a Jordan ring whose additive group
	is a $\ZZ[1/2]$-module and whose product is $\ZZ[1/2]$-bilinear. If $\JJ$ is
	such an algebra and $a\in\JJ$, we write
	\[
	L_a^\JJ(x):=a\circ x,
	\qquad x\in\JJ,
	\]
	for the \emph{Jordan multiplication operator} by $a$. When $\JJ$ is clear from the
	context, we simply write $L_a$.
	
	The main external input is the following theorem for $M_n(\FF)^+$, proved in
	\cite[Theorem~1.1]{GKT}.
	
	\begin{theorem}\label{thm:GKT}
		Let $n\ge1$. Every $\FF$-linear endomorphism of $M_n(\FF)$ is a finite
		composition of Jordan multiplication operators. Equivalently, the semigroup
		generated by the operators $L_A$, $A\in M_n(\FF)$, is the full endomorphism
		semigroup $\End_\FF(M_n(\FF))$.
	\end{theorem}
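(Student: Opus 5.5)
The plan is to reduce the statement to two separate facts about the semigroup $\mathcal S$ generated by the $L_A$: (a) $\mathcal S$ contains all of $\GL_\FF(M_n(\FF))$, and (b) for every $0\le r\le n^2$, $\mathcal S$ contains an operator of rank $r$. This reduction suffices because any $T\in\End_\FF(M_n(\FF))$ of rank $r$ can be written as $T=G\,P\,H$ with $G,H$ invertible and $P$ any fixed operator of rank $r$. For $n=1$ everything is trivial, since $L_a$ is multiplication by $a$, so I assume $n\ge2$.

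For (b), I would work with diagonal matrices $D=\mathrm{diag}(\lambda_1,\dots,\lambda_n)$. The operator $L_D$ acts diagonally on the basis $\{E_{ij}\}$, with eigenvalue $(\lambda_i+\lambda_j)/2$ on $E_{ij}$. Compositions of such operators are therefore diagonal in this basis, with entries equal to products of such expressions.
\begin{itemize}
\item Taking $D=E_{11}$ gives an operator that kills exactly the $E_{ij}$ with $i,j\ge2$.
\item Taking $\lambda_i\in\{1,-1\}$ kills exactly the $E_{ij}$ with $\lambda_i=-\lambda_j$.
\item Choosing the $\lambda_i$ generically keeps every $E_{ij}$ alive.
\end{itemize}
Composing such operators and then conjugating by permutations of the basis, which lie in $\GL$ by (a), I expect to produce coordinate projections (up to invertible diagonal rescaling) killing any prescribed set of matrix units. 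An alternative that avoids searching for every zero pattern uses two ingredients. First, for any $\lambda\ne0$, $L_{E_{11}}\circ L_{\mathrm{diag}(\lambda,-\lambda,\dots,-\lambda)}$ is nonzero only on $E_{11}$; this gives rank $1$, since the factor $(\lambda_1+\lambda_j)/2$ vanishes on every $E_{1j}$ and $E_{j1}$ with $j\ne1$. Second, a product of commuting coordinate projections with disjoint kernels has rank $n^2$ minus the total corank. So once I have a single corank-one element, all ranks follow by composing $\GL$-conjugates of it. Finding that corank-one element, for instance as some $L_A$ with nondiagonal $A$ having exactly one pair $\lambda_i+\lambda_j=0$ counted with multiplicity one, is a step I would still need to check.

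For (a), the main obstacle, I first note that $L_A$ is invertible whenever $A$ is diagonalizable with $\lambda_i+\lambda_j\ne0$ for all $i,j$. The difficulty is that $\mathcal S$ is only a semigroup, so inverses are not automatic. I would therefore aim to exhibit enough invertible elements to obtain generators of $\GL$: all invertible operators diagonal in the $E_{ij}$ basis, plus elementary transvections. The planned steps are:
\begin{enumerate}
\item Show that products of the values $(\lambda_i+\lambda_j)/2$, together with the relation $L_AL_{A^{-1}}\colon E_{ij}\mapsto\frac{(\lambda_i+\lambda_j)^2}{4\lambda_i\lambda_j}E_{ij}$, realize every invertible diagonal scaling.
\item Produce the transvections from $L_{I+tN}$ with $N$ nilpotent, corrected by diagonal factors.
\item Use conjugation-invariance, $L_{gAg^{-1}}=\mathrm{Ad}_g L_A\mathrm{Ad}_g^{-1}$, so that the inner automorphisms already lie in the group part once the diagonal and transvection pieces are in hand.
\end{enumerate}
Step 1 is where I expect real work, particularly over small finite fields, where the freedom in choosing the $\lambda_i$ is limited.
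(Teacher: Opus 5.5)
\textbf{There is a genuine gap: part (a) is not proved.} Your reduction is sound. Two operators of equal rank are equivalent, so $\mathcal S=\End_\FF(M_n(\FF))$ follows from $\GL_\FF(M_n(\FF))\subseteq\mathcal S$ together with one element of each rank.

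Part (b) is in fact immediate once (a) holds, and no search over nondiagonal $A$ is needed. The operator $L_{I-E_{11}}$ acts on $E_{ij}$ by $(\lambda_i+\lambda_j)/2$ with $\lambda=(0,1,\dots,1)$, so its kernel is exactly $\FF E_{11}$. Combined with (a), every corank-one operator then lies in $\mathcal S$, and products of coordinate projections give every rank.

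The entire content of the theorem sits in (a), and there you offer only a plan whose steps fail as stated. The paper gives no proof to compare with; it imports the statement from \cite[Theorem~1.1]{GKT}, so your argument would have to supply that result in full.

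\textbf{Step 1 cannot work.} For diagonal $D$, $L_D$ acts by the same scalar on $E_{ij}$ and $E_{ji}$. Equivalently, it commutes with $X\mapsto X^{\mathsf T}$, since $L_A(X)^{\mathsf T}=L_{A^{\mathsf T}}(X^{\mathsf T})$. Every product of such operators has the same symmetry, including $L_AL_{A^{-1}}$ for diagonal $A$. Hence the scaling that multiplies $E_{12}$ by $-1$ and fixes every other matrix unit is never produced. Over $\FF_3$ it is worse: $L_D$ is invertible iff $\lambda_i+\lambda_j\ne0$ for all $i,j$, which with $\lambda_i\in\{\pm1\}$ forces $D=\pm I$. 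So the diagonal route yields only $\pm\mathrm{id}$.

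\textbf{Step 2 does not produce transvections.} We have $L_{I+tN}=\mathrm{id}+tL_N$, and already $L_{E_{12}}$ has rank $2n-2\ge2$. ``Corrected by diagonal factors'' is not a mechanism. Something does work partially: if $N$ has rank one and $N^2=0$, then $L_N^2(X)=\tfrac12NXN=\tfrac12\operatorname{tr}(XN)N$. So $L_{I+tN}L_{I-tN}=\mathrm{id}-t^2L_N^2$ is a genuine transvection. However, its coefficient is $-t^2/2$, and rescaling $N$ only multiplies it by squares. Over $\mathbb R$, for example, only one sign occurs, and the inverse transvection is not visibly in $\mathcal S$.

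\textbf{Step 3 adds nothing.} The identity $L_{gAg^{-1}}=\mathrm{Ad}_gL_A\mathrm{Ad}_g^{-1}$ shows that $\mathcal S$ is stable under $T\mapsto\mathrm{Ad}_gT\mathrm{Ad}_g^{-1}$. It does not show that $\mathrm{Ad}_g\in\mathcal S$. And if Steps 1--2 succeeded, $\mathrm{Ad}_g$ would lie in $\mathcal S$ anyway.

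What is missing is therefore the core argument, which must work uniformly over every field of characteristic $\ne2$, including $\FF_3$ and infinite nonclosed fields:
\begin{itemize}
\item a way to obtain inverses inside the semigroup;
\item an explicit family in $\mathcal S$ generating all of $\GL_\FF(M_n(\FF))$, namely transvections together with elements of every determinant.
\end{itemize}
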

	
	\begin{remark}\label{rem:jordan-simple}
		We shall use that $M_n(\FF)^+$ is simple as a Jordan ring. This is a special
		case of Herstein's Jordan-ring simplicity theorem: if $\mathcal R$ is a
		simple associative ring of characteristic different from $2$, then
		$\mathcal R^+$ is simple as a Jordan ring
		\cite[Theorem~1]{HersteinSimple}. In the present matrix setting, the same
		conclusion also follows directly from Theorem~\ref{thm:GKT}. Indeed, let $\mathcal I$ be a nonzero Jordan ideal of $M_n(\FF)^+$, that is,
		an additive subgroup invariant under all maps $L_A:X\mapsto A\circ X$,
		$A\in M_n(\FF)$. Choose $0\ne X\in\mathcal I$. By
		Theorem~\ref{thm:GKT}, the semigroup generated by the operators $L_A$ is
		$\End_\FF(M_n(\FF))$; hence $\mathcal I$ is invariant under every
		$\FF$-linear endomorphism of $M_n(\FF)$. For any $Y\in M_n(\FF)$, choose
		$\Theta\in\End_\FF(M_n(\FF))$ with $\Theta(X)=Y$. Then
		$Y=\Theta(X)\in\mathcal I$, so $\mathcal I=M_n(\FF)$.
	\end{remark}
	
	We recall the Peirce notation and multiplication rules used below. Let $\JJ$
	be a Jordan algebra over $\ZZ[1/2]$, and let $e\in\JJ$ be an idempotent.
	The Peirce spaces of $\JJ$ relative to $e$ are the eigenspaces of the
	corresponding Jordan multiplication operator $L_e$:
	\[
	\JJ_\lambda(e):=\{x\in\JJ:e\circ x=\lambda x\},
	\qquad \lambda\in\{1,\tfrac12,0\}.
	\]
	They give the \emph{Peirce decomposition}
	\[
	\JJ=\JJ_1(e)\oplus\JJ_{1/2}(e)\oplus\JJ_0(e).
	\]
	This is an algebraic direct-sum decomposition; no finite-dimensionality is
	assumed, and some summands may be zero. The spaces $\JJ_1(e)$ and $\JJ_0(e)$
	are Jordan subalgebras, and $\JJ_1(e)\circ\JJ_0(e)=0$.
	
	Idempotents $e_1,\ldots,e_m$ are pairwise orthogonal if
	$e_i\circ e_j=0$ whenever $i\ne j$. If $e_1,\ldots,e_m$ are pairwise
	orthogonal idempotents with sum $e$, then $e$ is an idempotent and
	$\JJ_1(e)$ is a unital Jordan algebra with unit $e$. The simultaneous Peirce
	decomposition relative to $e_1,\ldots,e_m$ is taken inside $\JJ_1(e)$. Inside
	$\JJ_1(e)$, set
	\[
	\JJ_{ii}
	:=
	\{x\in\JJ_1(e): e_i\circ x=x,\ e_j\circ x=0\text{ for }j\ne i\},
	\]
	and, for $i\ne j$,
	\[
	\JJ_{ij}
	:=
	\{x\in\JJ_1(e): e_i\circ x=e_j\circ x=\tfrac12 x,\ 
	e_\ell\circ x=0\text{ for }\ell\notin\{i,j\}\}.
	\]
	Then the simultaneous Peirce decomposition is, as a direct sum of
	$\ZZ[1/2]$-submodules,
	\begin{equation}\label{eq:Peirce-decomposition}
		\JJ_1(e)=
		\bigoplus_i \JJ_{ii}\oplus
		\bigoplus_{i<j}\JJ_{ij}.
	\end{equation}
	We shall use the standard Peirce multiplication rules:
	\begin{equation}\label{eq:peirce-rules}
		\begin{gathered}
			\JJ_{ii}\circ\JJ_{jj}=0,\qquad
			\JJ_{ii}\circ\JJ_{ij}\subseteq\JJ_{ij},\qquad
			\JJ_{ij}\circ\JJ_{ij}\subseteq\JJ_{ii}\oplus\JJ_{jj}
			\qquad (i\ne j),\\
			\JJ_{ii}\circ\JJ_{jk}=0
			\qquad (i\notin\{j,k\}),\\
			\JJ_{ij}\circ\JJ_{jk}\subseteq\JJ_{ik}
			\qquad (i,j,k\text{ distinct}),\\
			\JJ_{ij}\circ\JJ_{k\ell}=0
			\qquad (\{i,j\}\cap\{k,\ell\}=\emptyset).
		\end{gathered}
	\end{equation}
	See, for example, \cite[Part~II, Chapters~8 and~13]{McCrimmon}. 
	
	We also recall the following standard special-case form of the Peirce
	decomposition.
	\begin{remark}\label{rem:assoc-peirce-zero}
		Let $\cA$ be a unital associative algebra over $\ZZ[1/2]$, and let
		$e\in\cA$ be an idempotent. Then the Peirce spaces of $\cA^+$ relative to
		$e$ are
		\[
		(\cA^+)_1(e)=e\cA e,
		\qquad
		(\cA^+)_{\frac12}(e)=e\cA(1-e)\oplus(1-e)\cA e,
		\qquad
		(\cA^+)_0(e)=(1-e)\cA(1-e).
		\]
		Indeed, write every $a\in\cA$ as
		\[
		a=eae+ea(1-e)+(1-e)ae+(1-e)a(1-e).
		\]
		With respect to the Jordan product in $\cA^+$, the operator
		$L_e:a\mapsto e\circ a$ acts as the identity on $e\cA e$, as multiplication
		by $1/2$ on $e\cA(1-e)\oplus(1-e)\cA e$, and as zero on
		$(1-e)\cA(1-e)$. Hence the displayed corner decomposition identifies these
		three summands with the Peirce $1$, $1/2$, and $0$ spaces, respectively.
	\end{remark}
	
	Finally, we record the localization device used to pass from Jordan algebras
	over $\ZZ[1/2]$ to arbitrary $2$-torsion-free Jordan rings.  Let $\JJ$ be a
	$2$-torsion-free Jordan ring and let
	$S=\{1,2,2^2,\ldots\}\subseteq\ZZ$.  We localize the underlying
	$\ZZ$-module of $\JJ$ at $S$ and write $\JJ[1/2]:=S^{-1}\JJ$. By the standard construction of modules of fractions and its tensor-product description \cite[the construction preceding Proposition~3.3 and
	Proposition~3.5]{AM}, there are canonical isomorphisms
	\[
	S^{-1}\JJ
	\cong
	\ZZ[1/2]\otimes_{\ZZ}\JJ
	\cong
	\JJ\otimes_{\ZZ}\ZZ[1/2].
	\]
	This construction uses only the underlying $\ZZ$-module of $\JJ$ and hence does not require $\JJ$ to be unital.  Every element of $\JJ[1/2]$ can be written as $x/2^m$, with $x\in\JJ$ and $m\ge0$.  Moreover, $x/1=0$ if and only if $2^m x=0$ for some $m\ge0$; since $\JJ$ is $2$-torsion-free, the natural map $\JJ\to\JJ[1/2]$ is injective.
	
	The product on $\JJ$ extends uniquely by $\ZZ[1/2]$-bilinearity to
	$\JJ[1/2]$; explicitly,
	\[
	\frac{x}{2^r}\circ\frac{y}{2^s}
	:=
	\frac{x\circ y}{2^{r+s}}.
	\]
	With this product, $\JJ[1/2]$ is again a Jordan algebra. Indeed, if
	$a=x/2^r$ and $b=y/2^s$, then
	\[
	a^2\circ(a\circ b)-a\circ(a^2\circ b)
	=
	\frac{x^2\circ(x\circ y)-x\circ(x^2\circ y)}{2^{3r+s}}
	=0,
	\]
	because the numerator is zero in $\JJ$. Thus $\JJ[1/2]$ is a Jordan algebra
	over $\ZZ[1/2]$.
	
	\begin{lemma}\label{lem:localization-peirce}
		Let $\JJ$ be a $2$-torsion-free Jordan ring. The canonical localization map
		\[
		\iota:\JJ\to\JJ[1/2],\qquad x\mapsto x/1,
		\]
		is an injective Jordan ring homomorphism. Moreover, if $e\in\JJ$ is an
		idempotent, then $\JJ_0(e)$ is a Jordan subring of $\JJ$.
	\end{lemma}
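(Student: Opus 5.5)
The plan is to verify the two assertions separately. The first is essentially already in the discussion preceding the statement. The second reduces, via $\iota$, to the Peirce decomposition in the Jordan algebra $\JJ[1/2]$.

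For the first assertion, additivity of $\iota$ is clear from the construction of modules of fractions, and injectivity was noted above: $x/1=0$ forces $2^m x=0$ for some $m$, hence $x=0$ by $2$-torsion-freeness. Multiplicativity is the special case $r=s=0$ of the defining formula for the product on $\JJ[1/2]$:
\[
\iota(x)\circ\iota(y)=\frac{x\circ y}{1}=\iota(x\circ y).
\]
So $\iota$ is an injective Jordan ring homomorphism.

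For the second assertion, let $e\in\JJ$ be an idempotent. Then $\iota(e)$ is an idempotent of the Jordan algebra $\JJ[1/2]$ over $\ZZ[1/2]$. By injectivity and multiplicativity of $\iota$, we have $x\in\JJ_0(e)$ if and only if $\iota(e)\circ\iota(x)=0$, that is,
\[
\JJ_0(e)=\iota^{-1}\bigl(\JJ[1/2]_0(\iota(e))\bigr).
\]
Clearly $\JJ_0(e)$ is an additive subgroup, since $L_e$ is additive. For closure under the product, take $x,y\in\JJ_0(e)$. Their images lie in the Peirce $0$-space of $\iota(e)$, which the preliminaries record as a Jordan subalgebra of $\JJ[1/2]$. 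Hence $\iota(x\circ y)=\iota(x)\circ\iota(y)$ lies in that $0$-space, so $e\circ(x\circ y)$ maps to zero under $\iota$. By injectivity, $e\circ(x\circ y)=0$.

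The only substantive point is the cited fact that the Peirce $0$-space is a subalgebra. This needs the factor $\tfrac12$, which is exactly why we pass to $\JJ[1/2]$ rather than working in $\JJ$ directly. If a self-contained check were wanted, I would linearize the Jordan identity in $\JJ[1/2]$ and specialize to $e$. From $e\circ x=e\circ y=0$, this gives $2\,e\circ(e\circ(x\circ y))=e\circ(x\circ y)$, up to the standard Peirce bookkeeping. So $x\circ y$ lies in the $\tfrac12$-eigenspace of $L_e$ and also satisfies a second relation forcing its $\tfrac12$-component to vanish. I expect to simply cite the standard multiplication rules \cite{McCrimmon} rather than redo this computation.
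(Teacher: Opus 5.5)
Your argument is correct and is essentially the paper's proof: you get injectivity and multiplicativity of $\iota$ from the localization construction, then pull back the closure of the Peirce $0$-space of $\JJ[1/2]$ along the injective map $\iota$. One caveat on your optional aside: a relation $2L_e^2w=L_ew$ for $w=x\circ y$ only removes the Peirce $1$-component of $w$ and does not place $w$ in the $\tfrac12$-space; moreover, no factor $\tfrac12$ is actually needed, because polarizing $[L_a,L_{a^2}]=0$ and setting two arguments equal to $e$ already gives $2\,e\circ(x\circ y)=0$ in $\JJ$, and $2$-torsion-freeness finishes the argument.
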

	
	\begin{proof}
		The map $\iota$ is additive and preserves the Jordan product by construction.
		If $\iota(x)=0$, then $2^m x=0$ for some $m\ge0$. Since $\JJ$ is
		$2$-torsion-free, repeated cancellation of $2$ gives $x=0$. Hence $\iota$ is
		injective.
		
		Let $x,y\in\JJ_0(e)$. We view $x,y$ as elements of $\JJ[1/2]$ through the
		embedding $\iota$. Then $x,y\in(\JJ[1/2])_0(e)$. Since $(\JJ[1/2])_0(e)$ is a
		Jordan subalgebra of $\JJ[1/2]$ by the Peirce multiplication rules, it follows that $x\circ y\in(\JJ[1/2])_0(e)$. Equivalently, $e\circ(x\circ y)=0$ in $\JJ[1/2]$. Since $x\circ y\in\JJ$ and
		$\iota$ is injective, this equality already holds in $\JJ$. Hence
		$x\circ y\in\JJ_0(e)$.
		
		Finally, $\JJ_0(e)$ is an additive subgroup of $\JJ$: it contains $0$, and if
		$x,y\in\JJ_0(e)$, then $e\circ(x-y)=e\circ x-e\circ y=0$. Therefore
		$\JJ_0(e)$ is a Jordan subring of $\JJ$.
	\end{proof}
	
	\section{Proof of the main theorem}\label{sec:proof-main}
	
	We first prove the zero-preserving case. For Jordan algebras over
	$\ZZ[1/2]$, the key point is to establish one additivity relation in the
	upper-left $2\times2$ corner, by means of a Peirce-decomposition argument.
	Once such a relation is known for two linearly independent matrices,
	Theorem~\ref{thm:GKT} transports it to arbitrary pairs of matrices, and hence
	proves additivity of $\phi$. We then pass to arbitrary $2$-torsion-free
	Jordan rings by localization at $2$. Finally, the general case of
	Theorem~\ref{thm:general} is obtained by subtracting the idempotent value
	$\phi(0)$ and applying the zero-preserving result.
	
	\begin{proposition}\label{prop:corner-relation}
		Let $\psi:M_2(\FF)^+\to\JJ$ be Jordan multiplicative, where $\JJ$ is a
		Jordan algebra over $\ZZ[1/2]$, and suppose that $\psi(0)=0$. Then
		\[
		\psi(I_2) = \psi(E_{11}+E_{22})=\psi(E_{11})+\psi(E_{22}).
		\]
	\end{proposition}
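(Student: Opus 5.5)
The plan is to work entirely with Peirce decompositions inside $\JJ$. We extract three orthogonal idempotents from $\psi(E_{11})$, $\psi(E_{22})$ and $\psi(I_2)$, and then show that the third one vanishes by expressing $\psi(I_2)$ as a Jordan product of two suitable values of $\psi$.

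\emph{Step 1 (idempotents).} Set $e_1:=\psi(E_{11})$, $e_2:=\psi(E_{22})$ and $u:=\psi(I_2)$.
\begin{itemize}
\item Since $E_{ii}\circ E_{ii}=E_{ii}$ and $I_2\circ I_2=I_2$, the elements $e_1,e_2,u$ are idempotents.
\item Since $E_{11}\circ E_{22}=0$ and $\psi(0)=0$, we get $e_1\circ e_2=0$.
\item Since $I_2\circ E_{ii}=E_{ii}$, we get $e_1,e_2\in\JJ_1(u)$.
\end{itemize}
It then follows, by a standard Peirce computation in the unital Jordan algebra $\JJ_1(u)$, that $e_3:=u-e_1-e_2$ is an idempotent orthogonal to $e_1$ and $e_2$. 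We may therefore use the simultaneous Peirce decomposition \eqref{eq:Peirce-decomposition} of $\JJ_1(u)$ relative to $e_1,e_2,e_3$, which sum to $u$. The goal becomes $e_3=0$.

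\emph{Step 2 (a Peirce-$(1,2)$ element).} Let $X:=E_{12}+E_{21}$, and put $a:=\psi(X)$ and $b:=\psi(X/2)$. Then:
\begin{itemize}
\item $I_2\circ X=X$, so $a\in\JJ_1(u)$;
\item $E_{11}\circ X=E_{22}\circ X=X/2$, so $e_1\circ a=e_2\circ a=b$.
\end{itemize}
Write $a=\sum_{i\le j}a_{ij}$ in the decomposition. Then
\[
e_1\circ a=a_{11}+\tfrac12(a_{12}+a_{13}),\qquad e_2\circ a=a_{22}+\tfrac12(a_{12}+a_{23}).
\]
Comparing Peirce components of these two equal expressions gives $a_{11}=a_{22}=a_{13}=a_{23}=0$, and hence $b=\tfrac12 a_{12}\in\JJ_{12}$. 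By \eqref{eq:peirce-rules}, it follows that
\[
c:=\psi(I_2/4)=\psi\bigl((X/2)\circ(X/2)\bigr)=b\circ b\in\JJ_{11}\oplus\JJ_{22}.
\]

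\emph{Step 3 (killing $e_3$).} We have $(I_2/4)\circ(4I_2)=I_2$, so $u=c\circ y$ with $y:=\psi(4I_2)$. Moreover $I_2\circ 4I_2=4I_2$ gives $y\in\JJ_1(u)$. The multiplication rules \eqref{eq:peirce-rules} give
\[
(\JJ_{11}\oplus\JJ_{22})\circ\JJ_{k\ell}\subseteq\JJ_{11}\oplus\JJ_{22}\oplus\JJ_{12}\oplus\JJ_{13}\oplus\JJ_{23}.
\]
For example, $\JJ_{11}\circ\JJ_{33}=0$ and $\JJ_{11}\circ\JJ_{13}\subseteq\JJ_{13}$. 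So $c\circ y$ has zero $\JJ_{33}$-component. On the other hand, the $\JJ_{33}$-component of $u=e_1+e_2+e_3$ is $e_3$. Hence $e_3=0$, that is, $\psi(I_2)=\psi(E_{11})+\psi(E_{22})$.

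The main obstacle is Step 2: one has to find matrix relations that pin down the Peirce position of a value of $\psi$ without using any additivity. The essential trick is that the two distinct idempotents $E_{11}$ and $E_{22}$ both send $X$ to the same matrix $X/2$, which forces $b$ into $\JJ_{12}$. In Step 3 the care needed is only in checking that no product of the listed Peirce spaces lands in $\JJ_{33}$.
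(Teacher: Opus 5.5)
Your proof is correct and follows essentially the same route as the paper. Both use the same three orthogonal idempotents and the same observation that $E_{11}$ and $E_{22}$ act identically on a scalar multiple of $E_{12}+E_{21}$, which forces a value of $\psi$ into $\JJ_{12}$. Both then compare $\JJ_{33}$-components.

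The only difference is the scaling. The paper uses $\psi(2(E_{12}+E_{21}))$, which puts $\psi(E_{12}+E_{21})$ itself in $\JJ_{12}$. It then gets $\psi(I_2)=\psi(E_{12}+E_{21})\circ\psi(E_{12}+E_{21})$ directly. That would make your extra factorization $I_2=(I_2/4)\circ(4I_2)$ in Step~3 unnecessary, though your Step~3 is valid as written.
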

	
	\begin{proof}
		Set
		\[
		e_1:=\psi(E_{11}),\qquad
		e_2:=\psi(E_{22}),\qquad
		e:=\psi(I_2).
		\]
		Since $E_{11}$, $E_{22}$ and $I_2$ are idempotents, so are
		$e_1$, $e_2$ and $e$. Moreover,
		\[
		e_1\circ e_2=\psi(E_{11}\circ E_{22})=\psi(0)=0,
		\qquad
		e\circ e_i=\psi(I_2\circ E_{ii})=e_i, \quad i=1,2.
		\]
		Also, since $I_2\circ X=X$ for every $X\in M_2(\FF)$, we have
		\[
		e\circ\psi(X)=\psi(X),\qquad X\in M_2(\FF).
		\]
		Thus the image of $\psi$ is contained in $\JJ_1(e)$.
		Set
		\[
		e_3:=e-e_1-e_2.
		\]
		We claim that $e_3 = 0$. Indeed, first note that $e_3\circ e_1=0$ and $e_3\circ e_2=0$. Moreover,
		\begin{align*}
			e_3\circ e_3
			&=(e-e_1-e_2)\circ(e-e_1-e_2)  \\
			&=e\circ e-2e\circ e_1-2e\circ e_2
			+e_1\circ e_1+2e_1\circ e_2+e_2\circ e_2  \\
			&=e-2e_1-2e_2+e_1+e_2
			=e-e_1-e_2=e_3.
		\end{align*}
		Thus $e_1,e_2,e_3$ are pairwise orthogonal idempotents with sum $e$.
		
		Let 
		\[
		c:=\psi(E_{12}+E_{21}), \qquad  d:=\psi(2(E_{12}+E_{21})).
		\]
		From
		\[
		E_{11}\circ(2(E_{12}+E_{21}))=E_{12}+E_{21}=E_{22}\circ(2(E_{12}+E_{21}))
		\]
		we get
		\begin{equation}\label{eq:e1de2d}
			e_1\circ d=c=e_2\circ d.
		\end{equation}
		Decompose $d \in \JJ_1(e)$ relative to $e_1,e_2,e_3$ as in \eqref{eq:Peirce-decomposition}:
		\[
		d=d_{11}+d_{22}+d_{33}+d_{12}+d_{13}+d_{23}.
		\]
		Then
		\[
		e_1\circ d=d_{11}+\frac12d_{12}+\frac12d_{13},
		\qquad
		e_2\circ d=d_{22}+\frac12d_{12}+\frac12d_{23}.
		\]
		Comparing the direct Peirce components in \eqref{eq:e1de2d} gives
		\[
		d_{11}=d_{22}=d_{13}=d_{23}=0,
		\]
		and hence $c=\frac12d_{12}\in\JJ_{12}$. Since 
		\[
		(E_{12}+E_{21})\circ(E_{12}+E_{21})=I_2,
		\]
		we have
		\[
		e=\psi(I_2)
		=\psi(E_{12}+E_{21})\circ\psi(E_{12}+E_{21})
		=c\circ c.
		\]
		By the Peirce multiplication rule
		$\JJ_{12}\circ\JJ_{12}\subseteq\JJ_{11}\oplus\JJ_{22}$ from
		\eqref{eq:peirce-rules}, the element $e=c\circ c$ has zero
		$\JJ_{33}$-component. On the other hand, since $e_1,e_2,e_3$ are pairwise orthogonal idempotents, we have $e_i\in\JJ_{ii}$ for $i=1,2,3$.
		Therefore, in the simultaneous Peirce decomposition determined by
		$e_1,e_2,e_3$, the $\JJ_{33}$-component of $e=e_1+e_2+e_3$ is exactly $e_3$. Hence $e_3=0$ and consequently $e=e_1+e_2$.
	\end{proof}
	
	The next lemma is the point at which Theorem~\ref{thm:GKT} enters the proof. It transports one additivity relation between two linearly independent matrices to all pairs of matrices.
	
	\begin{lemma}\label{lem:transport}
		Let $\phi:M_n(\FF)^+\to\JJ$ be Jordan multiplicative, where $\JJ$ is a Jordan
		algebra over $\ZZ[1/2]$. Suppose that $U,V\in M_n(\FF)$ are linearly independent
		and
		\begin{equation}\label{eq:one-relation}
			\phi(U+V)=\phi(U)+\phi(V).
		\end{equation}
		Then $\phi$ is additive.
	\end{lemma}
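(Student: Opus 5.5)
The idea is to transport the single relation \eqref{eq:one-relation} to arbitrary pairs of matrices using Theorem~\ref{thm:GKT}. Jordan multiplicativity gives, for every $A\in M_n(\FF)$ and every $X$,
\[
\phi(L_A X)=\phi(A)\circ\phi(X)=L_{\phi(A)}\phi(X).
\]
By induction on the length of a composition, for every finite composition $\Theta=L_{A_1}\cdots L_{A_m}$ we obtain
\[
\phi(\Theta X)=\widetilde\Theta\,\phi(X),\qquad \widetilde\Theta:=L_{\phi(A_1)}\cdots L_{\phi(A_m)}.
\]
The key point is that $\widetilde\Theta$ is an additive operator on $\JJ$, because each $L_{\phi(A_i)}^\JJ$ is additive by biadditivity of the product. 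It depends on the chosen factorization of $\Theta$, but that does not matter below.

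Now let $X,Y\in M_n(\FF)$ be arbitrary. Since $U,V$ are linearly independent, they can be extended to a basis of $M_n(\FF)$. This yields an $\FF$-linear endomorphism $\Theta$ with $\Theta(U)=X$, $\Theta(V)=Y$ (send the remaining basis vectors anywhere, say to $0$). By Theorem~\ref{thm:GKT}, $\Theta$ is a finite composition of operators $L_{A_i}$; fix one such factorization and the corresponding $\widetilde\Theta$. Since $\Theta$ is linear, $\Theta(U+V)=X+Y$. Therefore
\[
\phi(X+Y)=\phi(\Theta(U+V))=\widetilde\Theta\,\phi(U+V)=\widetilde\Theta\bigl(\phi(U)+\phi(V)\bigr)=\widetilde\Theta\phi(U)+\widetilde\Theta\phi(V)=\phi(X)+\phi(Y),
\]
which is additivity.

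There is no real obstacle here; the whole content sits in Theorem~\ref{thm:GKT}. The one point to check carefully is that the same factorization of $\Theta$ is used in all three evaluations, so that a single additive $\widetilde\Theta$ appears throughout. This is what allows the bracket $\phi(U)+\phi(V)$ to be split. The hypothesis that $\JJ$ is a $\ZZ[1/2]$-algebra is not even needed at this step; only biadditivity of $\circ$ is used.
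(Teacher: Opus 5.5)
Your proof is correct and is essentially the paper's own argument: you choose a linear $\Theta$ with $\Theta(U)=X$, $\Theta(V)=Y$, factor it into Jordan multiplication operators via Theorem~\ref{thm:GKT}, and push the relation through the corresponding fixed additive operator on $\JJ$. Your side remark is also accurate: this step uses only biadditivity of the product in $\JJ$, not its $\ZZ[1/2]$-structure.
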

	
	\begin{proof}
		Let $X,Y\in M_n(\FF)$. Choose an $\FF$-linear endomorphism $\Theta$ of
		$M_n(\FF)$ such that $\Theta(U)=X$ and $\Theta(V)=Y$. By
		Theorem~\ref{thm:GKT}, we may write $\Theta=L_{A_m}\cdots L_{A_1}$, for suitable matrices $A_1,\ldots,A_m\in M_n(\FF)$, with the empty product
		allowed. Define the corresponding additive operator on $\JJ$ by
		\[
		\Sigma:=L^\JJ_{\phi(A_m)}\cdots L^\JJ_{\phi(A_1)},
		\qquad
		L^\JJ_a(x)=a\circ x.
		\]
		A direct induction using \eqref{eq:JM} gives
		\[
		\phi(\Theta(Z))=\Sigma(\phi(Z)),
		\qquad Z\in M_n(\FF).
		\]
		Hence, the assumption and the additivity of $\Theta$ and $\Sigma$ give
		\begin{align*}
			\phi(X+Y)&=\phi(\Theta(U)+\Theta(V))=\phi(\Theta(U+V))
			=\Sigma(\phi(U+V))\stackrel{\eqref{eq:one-relation}}=\Sigma(\phi(U)+\phi(V))\\
			&=\Sigma(\phi(U))+\Sigma(\phi(V))          =\phi(\Theta(U))+\phi(\Theta(V))
			=\phi(X)+\phi(Y).
		\end{align*}
		Thus $\phi$ is additive.
	\end{proof}
	
	\begin{proposition}\label{prop:zero}
		Let $\phi:M_n(\FF)^+\to\JJ$, $n \ge 2$ be Jordan multiplicative, where $\JJ$ is a Jordan algebra over $\ZZ[1/2]$. If $\phi(0)=0$, then $\phi$ is additive and is either
		the zero map or injective.
	\end{proposition}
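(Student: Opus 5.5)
The plan is to reduce to the upper-left $2\times2$ corner, get one additive relation there from Proposition~\ref{prop:corner-relation}, and spread it to all of $M_n(\FF)$ with Lemma~\ref{lem:transport}. Simplicity of $M_n(\FF)^+$ (Remark~\ref{rem:jordan-simple}) then gives the dichotomy.

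\emph{Step 1: restriction to the corner.} Consider the embedding $\iota:M_2(\FF)\to M_n(\FF)$ that places a $2\times2$ matrix in the upper-left corner and pads with zeros. This map is an injective Jordan algebra homomorphism, since the corner $(E_{11}+E_{22})M_n(\FF)(E_{11}+E_{22})$ is closed under the associative product. Hence $\psi:=\phi\circ\iota:M_2(\FF)^+\to\JJ$ is Jordan multiplicative, and $\psi(0)=\phi(0)=0$. Proposition~\ref{prop:corner-relation} then gives
\[
\phi(E_{11}+E_{22})=\phi(E_{11})+\phi(E_{22})
\]
in $M_n(\FF)$.

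\emph{Step 2: additivity.} The matrices $U=E_{11}$ and $V=E_{22}$ are linearly independent in $M_n(\FF)$, because $n\ge2$. Lemma~\ref{lem:transport} applies directly and shows that $\phi$ is additive. Together with Jordan multiplicativity, this makes $\phi$ a Jordan ring homomorphism.

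\emph{Step 3: zero or injective.} Since $\phi$ is additive and preserves the Jordan product, its kernel $\mathcal I:=\{X:\phi(X)=0\}$ is an additive subgroup. It is invariant under every $L_A$: if $X\in\mathcal I$, then $\phi(A\circ X)=\phi(A)\circ\phi(X)=\phi(A)\circ 0=0$, using biadditivity of the product in $\JJ$. So $\mathcal I$ is a Jordan ideal of $M_n(\FF)^+$. By Remark~\ref{rem:jordan-simple}, either $\mathcal I=0$, in which case $\phi$ is injective by additivity, or $\mathcal I=M_n(\FF)$, in which case $\phi$ is zero.

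I expect no real obstacle, since the substantive work is already in Proposition~\ref{prop:corner-relation} and Theorem~\ref{thm:GKT}. The one point to verify is that the corner embedding is Jordan multiplicative and preserves $0$, which is immediate from block multiplication.
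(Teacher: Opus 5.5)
Your proposal is correct and matches the paper's proof: restrict to the upper-left copy of $M_2(\FF)^+$, apply Proposition~\ref{prop:corner-relation} to get $\phi(E_{11}+E_{22})=\phi(E_{11})+\phi(E_{22})$, obtain additivity from Lemma~\ref{lem:transport}, and conclude with the Jordan simplicity of $M_n(\FF)^+$ from Remark~\ref{rem:jordan-simple}. You also spell out two points the paper leaves implicit, namely that the corner embedding is a zero-preserving Jordan homomorphism and that the kernel is invariant under every $L_A$.
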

	
	\begin{proof}
		Restrict $\phi$ to the upper-left $2\times2$ corner $M_2(\FF)^+$ of
		$M_n(\FF)^+$. Proposition~\ref{prop:corner-relation} gives 
		\[
		\phi(E_{11}+E_{22})=\phi(E_{11})+\phi(E_{22}).
		\]
		Since $E_{11}$ and $E_{22}$ are linearly independent,
		Lemma~\ref{lem:transport} implies that $\phi$ is additive. The kernel of the additive Jordan homomorphism $\phi$ is a Jordan ideal of the
		Jordan ring $M_n(\FF)^+$. By Remark~\ref{rem:jordan-simple}, the kernel is either $0$ or all of
		$M_n(\FF)$. In the first case $\phi$ is injective, and in the second case it is
		the zero map.
	\end{proof}
	
	\begin{lemma}\label{lem:localization-zero}
		Let $\JJ$ be a $2$-torsion-free Jordan ring, and let
		$\phi:M_n(\FF)^+\to\JJ$, $n \ge 2$ be a Jordan multiplicative map such that $\phi(0) = 0$. Then
		$\phi$ is additive and is either zero or injective.
	\end{lemma}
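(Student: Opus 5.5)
The plan is to reduce to Proposition~\ref{prop:zero} by composing with the localization embedding of Lemma~\ref{lem:localization-peirce}. Let $\iota:\JJ\to\JJ[1/2]$ be the canonical map $x\mapsto x/1$. By Lemma~\ref{lem:localization-peirce}, $\iota$ is an injective Jordan ring homomorphism, and $\JJ[1/2]$ is a Jordan algebra over $\ZZ[1/2]$, as verified just before that lemma.

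Consider $\tilde\phi:=\iota\circ\phi:M_n(\FF)^+\to\JJ[1/2]$. It is Jordan multiplicative, because $\iota$ preserves the product:
\[
\tilde\phi(X\circ Y)=\iota(\phi(X)\circ\phi(Y))=\tilde\phi(X)\circ\tilde\phi(Y).
\]
It is also zero-preserving, since $\tilde\phi(0)=\iota(0)=0$. Proposition~\ref{prop:zero} then shows that $\tilde\phi$ is additive and is either zero or injective.

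We transfer these properties back to $\phi$ using injectivity and additivity of $\iota$. For additivity,
\[
\iota(\phi(X+Y))=\tilde\phi(X)+\tilde\phi(Y)=\iota(\phi(X)+\phi(Y)),
\]
so $\phi(X+Y)=\phi(X)+\phi(Y)$. If $\tilde\phi=0$, then $\iota\circ\phi=0$, and injectivity of $\iota$ gives $\phi=0$. If $\tilde\phi$ is injective, then $\phi$ is injective, because $\phi(X)=\phi(Y)$ implies $\tilde\phi(X)=\tilde\phi(Y)$.

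There is no real obstacle here. The only point needing care is that Proposition~\ref{prop:zero} requires a $\ZZ[1/2]$-algebra codomain. The localization supplies one, and the $2$-torsion-freeness of $\JJ$ is exactly what makes $\iota$ injective, which is the ingredient used to pull everything back to $\JJ$.
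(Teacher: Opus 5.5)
Your proposal is correct and follows the paper's proof exactly: compose $\phi$ with the injective localization map $\iota:\JJ\to\JJ[1/2]$, apply Proposition~\ref{prop:zero} to $\iota\circ\phi$, and pull additivity and the zero-or-injective dichotomy back to $\phi$ using injectivity of $\iota$. You simply write out the pull-back steps that the paper leaves implicit.
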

	
	\begin{proof}
		Let $\iota:\JJ\to\JJ[1/2]$ be the injective localization map from
		Lemma~\ref{lem:localization-peirce}. The composite
		$\iota\circ \phi:M_n(\FF)^+\to\JJ[1/2]$ maps zero to zero and is Jordan
		multiplicative, so Proposition~\ref{prop:zero} applies. Thus $\iota\circ \phi$ is
		additive and is either zero or injective. Injectivity of $\iota$ gives the
		same result for $\phi$.
	\end{proof}
	
	\begin{proof}[Proof of Theorem~\ref{thm:general}]
		Suppose that $\phi$ is Jordan multiplicative. Since $0\circ0=0$, the element $e:=\phi(0)$ is idempotent. Also, for any $X \in M_n(\FF)$, $0\circ X=0$ gives $e\circ\phi(X)=e$. Thus, $e\circ(\phi(X)-e)=e-e\circ e=0$. Define
		\[
		\psi: M_n(\FF)^+ \to \JJ, \qquad \psi(X):=\phi(X)-e.
		\]
		The map $\psi$ takes its values in $\JJ_0(e)$. By
		Lemma~\ref{lem:localization-peirce}, $\JJ_0(e)$ is a Jordan subring of $\JJ$; since $\JJ$ is $2$-torsion-free, so is $\JJ_0(e)$. Furthermore $\psi(0)=0$. For all $X,Y\in M_n(\FF)$ we have
		\begin{align*}
			e+\psi(X\circ Y)
			&=\phi(X\circ Y)
			=\phi(X)\circ\phi(Y) =(e+\psi(X))\circ(e+\psi(Y))\\
			&=e+\psi(X)\circ\psi(Y).
		\end{align*}
		Hence
		$\psi(X\circ Y)=\psi(X)\circ\psi(Y)$, so $\psi$ is zero-preserving and Jordan
		multiplicative. By Lemma~\ref{lem:localization-zero}, $\psi$ is additive and is
		either zero or injective. Therefore $\phi$ is either constant equal to the
		idempotent $e$, or injective.
		
		The converse is immediate.
	\end{proof}
	
	\begin{corollary}\label{cor:commutative-codomain}
		Let $\mathcal C$ be a commutative associative $2$-torsion-free ring, regarded
		as a Jordan ring with its usual product. If $n\ge2$ and
		$\phi:M_n(\FF)^+\to\mathcal C$ is Jordan multiplicative, then $\phi$ is constant idempotent-valued. In particular, every Jordan multiplicative map $M_n(\FF)^+\to \mathcal D$ into an integral domain $\mathcal D$ of characteristic different from
		$2$ is either the constant zero map or the constant one map.
	\end{corollary}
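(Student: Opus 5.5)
By Theorem~\ref{thm:general}, we can write $\phi=e+\psi$, where $e=\phi(0)$ is an idempotent of $\mathcal C$ and $\psi:M_n(\FF)^+\to\mathcal C_0(e)$ is a Jordan ring homomorphism that is either zero or injective. Our goal is to exclude the injective alternative. The key observation is that $\mathcal C$, and therefore $\mathcal C_0(e)$, is a commutative \emph{associative} ring. So $\psi$ is an additive map into an associative commutative ring that preserves the Jordan product, and we will show such a map must kill a nonzero element. Simplicity then forces $\psi=0$.

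We use the matrix units. Since $E_{11}\circ E_{12}=\tfrac12E_{12}$, we have $2\,(E_{11}\circ E_{12})=E_{12}$. Additivity of $\psi$ then gives
\[
\psi(E_{12})=2\,\psi(E_{11})\psi(E_{12}).
\]
Set $a=\psi(E_{11})$ and $b=\psi(E_{12})$, so $b=2ab$. Also $E_{11}$ is idempotent, so $a^2=a$. Multiplying $b=2ab$ by $a$ and using associativity and commutativity of $\mathcal C$ gives $ab=2a^2b=2ab$. Hence $ab=0$, and therefore $b=2ab=0$. Thus $\psi(E_{12})=0$ while $E_{12}\ne0$, so $\psi$ is not injective. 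By the dichotomy in Theorem~\ref{thm:general}, $\psi=0$, and $\phi\equiv e$ is constant and idempotent-valued. Note that this step needs only that $\mathcal C$ is associative and commutative, not $2$-torsion-freeness beyond what Theorem~\ref{thm:general} already uses.

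For the second assertion, an integral domain $\mathcal D$ of characteristic $\ne2$ is $2$-torsion-free, because $2x=0$ with $2\ne0$ forces $x=0$ in a domain. The first part therefore applies, and $\phi$ is constant with value an idempotent $e$ satisfying $e(e-1)=0$. Since $\mathcal D$ has no zero divisors, $e\in\{0,1\}$. If $\mathcal D$ is not assumed unital, the only idempotent it can have is $0$, or else a nonzero idempotent acts as an identity. In either reading the conclusion is the constant zero map or the constant one map.

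The only step that is not routine is the short computation with $E_{11}$ and $E_{12}$ above. Everything else is a direct appeal to Theorem~\ref{thm:general}.
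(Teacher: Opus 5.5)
Your proof is correct and follows the same route as the paper: you invoke Theorem~\ref{thm:general} and exclude the injective alternative by playing the associativity of $\mathcal C$ against the non-associativity of $M_n(\FF)^+$. The only difference is the witness. The paper uses the triple $(E_{12}\circ E_{21})\circ E_{11}\ne E_{12}\circ(E_{21}\circ E_{11})$ to contradict injectivity directly, while you use $(E_{11}\circ E_{11})\circ E_{12}\ne E_{11}\circ(E_{11}\circ E_{12})$ to produce the explicit kernel element $E_{12}$ and then apply the zero-or-injective dichotomy.
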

	
	\begin{proof}
		By Theorem~\ref{thm:general}, we have $\phi=e+\psi$, where $e:=\phi(0)$ is an
		idempotent of $\mathcal C$ and $\psi:M_n(\FF)^+\to\mathcal C_0(e)$ is a Jordan ring homomorphism. Moreover, $\psi$ is either zero or injective.
		
		Assume that $\psi$ is injective. Since $\mathcal C$ is commutative and associative, so is every Jordan subring of $\mathcal C$. Hence the image of $\psi$ is associative as a Jordan ring, and injectivity of $\psi$ would imply that $M_n(\FF)^+$ is associative as a Jordan ring, which is a contradiction: for instance,
		\[
		(E_{12}\circ E_{21})\circ E_{11}=\tfrac12E_{11}
		\ne
		\tfrac14(E_{11}+E_{22})=E_{12}\circ(E_{21}\circ E_{11}),
		\]
		since $\chr(\FF)\ne2$. Thus $\psi=0$, and $\phi$ is constant idempotent-valued. If $\mathcal C=\mathcal D$ is an integral domain with $\chr(\mathcal C) \ne 2$, then the only idempotents are $0$ and $1$.
	\end{proof}

	\section{Associative and matrix codomains}\label{sec:assoc-matrix}
	
	We now specialize the main theorem to associative and matrix codomains. First
	observe the unital alternative. If $\JJ$ is a unital $2$-torsion-free Jordan
	ring and $\phi:M_n(\FF)^+\to\JJ$ is a unital Jordan multiplicative map, then
	Theorem~\ref{thm:general} implies that $\phi$ is either injective or is the
	constant map equal to the unit of $\JJ$. In the nonconstant case, the shifted
	map $\psi:=\phi-\phi(0)$ is a Jordan ring monomorphism. For associative
	codomains, this brings the classical theorem of Jacobson--Rickart into play:
	additive Jordan homomorphisms from full matrix rings decompose into
	homomorphic and antihomomorphic parts. We shall use the following consequence
	of their matrix-ring theorem \cite[Theorem~7]{JacobsonRickart}. It will be
	applied below to obtain the associative-codomain splitting and an explicit
	block diagonal form for full matrix codomains.
	
	\begin{corollary}[Jacobson--Rickart]\label{cor:JR}
		Let $\cA$ be a unital associative algebra over $\ZZ[1/2]$, and let $\psi:M_n(\FF)^+\to\cA^+$, $n\ge2$, be a unital Jordan ring homomorphism. Then there exists an idempotent $p$ in the associative subring generated by $\psi(M_n(\FF))$, central in that
		subring, such that
		\[
		X\mapsto p\psi(X),
		\qquad
		X\mapsto (1-p)\psi(X)
		\]
		are respectively a ring homomorphism and a ring antihomomorphism, mapping
		$I_n$ to $p$ and $1-p$.
	\end{corollary}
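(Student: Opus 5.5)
The plan is to obtain the corollary from the Jacobson--Rickart matrix-ring theorem \cite[Theorem~7]{JacobsonRickart}, which handles additive Jordan homomorphisms of a full matrix ring $M_n(\mathcal R)$, $n\ge2$, into an associative ring in which $2$ is invertible. The steps are: reduce to the setting of that theorem, produce the idempotent from the images of the matrix units, and check the required properties.

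\textbf{Reduction.} Let $\mathcal B$ be the associative subring of $\cA$ generated by $\psi(M_n(\FF))$. It contains $\psi(I_n)=1$, and it is a $\ZZ[1/2]$-algebra because $\psi(\tfrac12X)=\tfrac12\psi(X)$ follows from additivity. The map $\psi$ is additive, unital, and preserves the symmetrized product. Multiplying by $2$ shows that it satisfies $\psi(XY+YX)=\psi(X)\psi(Y)+\psi(Y)\psi(X)$. Since $2$ is invertible, this is equivalent to $\psi(X^2)=\psi(X)^2$. Thus $\psi\colon M_n(\FF)\to\mathcal B$ is a Jordan homomorphism in the classical sense, and its image generates $\mathcal B$.

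\textbf{Construction of $p$.} Set $e_{ij}:=\psi(E_{ij})$. The standard Jordan-homomorphism identities give
\[
\psi(XYX)=\psi(X)\psi(Y)\psi(X),\qquad \psi(XYZ+ZYX)=\psi(X)\psi(Y)\psi(Z)+\psi(Z)\psi(Y)\psi(X).
\]
From these, the $e_{ii}$ are orthogonal idempotents summing to $1$. For distinct $i,j,k$, each of the products $e_{ij}e_{jk}$ and $e_{jk}e_{ij}$ equals either $e_{ik}$ or zero, and not both are nonzero. Jacobson--Rickart's construction then defines $p$ as a suitable sum such as
\[
p:=\sum_{i,k} e_{ij}e_{jk}e_{ki}
\]
(with case adjustments when $n=2$). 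They show that $p$ is an idempotent commuting with every $\psi(X)$. Consequently $p$ is central in $\mathcal B$, since it commutes with the generators. They also show that $X\mapsto p\psi(X)$ is multiplicative and $X\mapsto (1-p)\psi(X)$ is antimultiplicative.

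\textbf{Conclusion.} Both maps are additive, because $\psi$ is additive and multiplication by $p$ or $1-p$ is additive. Centrality of $p$ gives $p\psi(X)\,p\psi(Y)=p\,\psi(X)\psi(Y)$, and similarly for $1-p$. Hence the homomorphism and antihomomorphism properties of these restricted maps follow from the fact that, on $p\mathcal B$, $\psi$ is multiplicative, and on $(1-p)\mathcal B$ it is antimultiplicative. Finally, $p\psi(I_n)=p$ and $(1-p)\psi(I_n)=1-p$ because $\psi(I_n)=1$.

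\textbf{Main obstacle.} The delicate step is checking that \cite[Theorem~7]{JacobsonRickart} applies verbatim. Their setting needs an associative codomain in which $2$ is invertible, and $\mathcal B$ qualifies. Their hypothesis $n\ge2$ on the matrix ring is exactly our assumption. The point to confirm is that the idempotent they construct lies in the subring generated by the image, which is needed to get centrality inside $\mathcal B$. Since their $p$ is built only from the $e_{ij}$, this should hold.
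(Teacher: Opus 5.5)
Your argument follows the paper's route. Additivity together with invertibility of $2$ makes $\psi$ a Jordan homomorphism in Jacobson--Rickart's sense, and their Theorem~7 then splits $\psi$. The fact that the idempotent lies in the subring generated by $\psi(M_n(\FF))$, which contains $\psi(I_n)=1$, and is central there is read off from their proof, exactly as the paper does.

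One correction concerns your sketch of how $p$ arises. You claim that, for distinct $i,j,k$, each of $e_{ij}e_{jk}$ and $e_{jk}e_{ij}$ is either $e_{ik}$ or $0$. This is false in the generality of the corollary.

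\begin{itemize}
\item Once the splitting is known, $e_{ij}e_{jk}=p\,e_{ik}$ and $e_{jk}e_{ij}=(1-p)e_{ik}$.
\item Since $X\mapsto p\psi(X)$ is a homomorphism on the simple ring $M_n(\FF)$, $p\,e_{ik}=0$ forces $p=p\psi(I_n)=0$. Likewise $(1-p)e_{ik}=0$ forces $p=1$.
\item So your dichotomy holds precisely when $p\in\{0,1\}$, that is, when $\psi$ is already a homomorphism or an antihomomorphism, which the corollary does not assume.
\end{itemize}

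Concretely, take $\psi(X)=(X,X^{\mathsf T})\in M_n(\FF)\times M_n(\FF)$ with $n\ge3$. Then $e_{ij}e_{jk}=(E_{ik},0)$, which is neither $e_{ik}=(E_{ik},E_{ki})$ nor $0$.

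In the same example, your displayed sum $\sum_{i,k}e_{ij}e_{jk}e_{ki}$ (with $j$ fixed) equals $(nI_n,E_{jj})$, not $p=(I_n,0)$. A correct expression, valid for all $n\ge2$, is $p=\sum_i e_{ii}e_{ij}e_{ji}$, where for each $i$ one picks some $j\ne i$. This works because $e_{ij}e_{ji}=p\,e_{ii}+(1-p)e_{jj}$.

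Since you, like the paper, ultimately rely on Jacobson--Rickart for the properties of $p$, these slips do not invalidate the proof. The incorrect intermediate claims should nonetheless be removed or corrected.
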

	
	\begin{proof}
		Since $\psi$ is additive and both $M_n(\FF)$ and $\cA$ are
		$\ZZ[1/2]$-modules, $\psi$ is $\ZZ[1/2]$-linear. Hence preservation of the special Jordan product \eqref{eq:J-product} gives, for all $X,Y\in M_n(\FF)$,
		\begin{align*}
			\psi(XY+YX)
			&=2\psi\!\left(\frac{XY+YX}{2}\right)  \\
			&=2\bigl(\psi(X)\circ\psi(Y)\bigr)
			=\psi(X)\psi(Y)+\psi(Y)\psi(X).
		\end{align*}
		By the equivalence stated by Jacobson--Rickart \cite[p.~479; see also p.~481]{JacobsonRickart}, it follows that $\psi$ is a Jordan homomorphism in their terminology. Therefore \cite[Theorem~7]{JacobsonRickart} applies, and $\psi$ is the sum of a homomorphism and an antihomomorphism. In the proof of that theorem, the summands are obtained by multiplication by complementary central
		idempotents in the associative subring generated by $\psi(M_n(\FF))$.
		Since $\psi$ is unital, this subring has identity $\psi(I_n)=1$. Writing
		these idempotents as $p$ and $1-p$ gives the stated form.
	\end{proof}
	
	\begin{proof}[Proof of Theorem~\ref{thm:associative}]
		By Theorem~\ref{thm:general}, $e:=\phi(0)$ is idempotent and
		$\psi:=\phi-e$ is a zero-preserving Jordan ring homomorphism into
		$(\cA^+)_0(e)$.  By Remark~\ref{rem:assoc-peirce-zero}, this Peirce 0-space
		is the associative corner $(1-e)\cA(1-e)$.
		
		Set $p:=\psi(I_n)$.  Then $p$ is an idempotent.  Since $I_n\circ X=X$, we have
		$p\circ\psi(X)=\psi(X)$ for all $X\in M_n(\FF)$ and hence $\psi(X)=p\psi(X)p$ by Remark~\ref{rem:assoc-peirce-zero}. Therefore,
		$\psi$ is a unital Jordan ring homomorphism $\psi:M_n(\FF)^+\to (p\cA p)^+$, where the corner $p\cA p$ is viewed as a unital associative algebra with identity $p$.
		
		Applying Corollary~\ref{cor:JR} to $\psi$ gives
		an idempotent $f$ in the associative subring of $p\cA p$ generated by
		$\psi(M_n(\FF))$, central in that subring, such that $\theta(X):=f\psi(X)$ and $\eta(X):=(p-f)\psi(X)$ are respectively a unital ring homomorphism $M_n(\FF)\to f\cA f$ and a unital ring antihomomorphism
		$M_n(\FF)\to(p-f)\cA(p-f)$.  Moreover, $\psi=\theta+\eta$.  Therefore
		$\phi=e+\theta+\eta$, as claimed.
	\end{proof}
	
	Let $\KK$ be another field of characteristic different from $2$. 
	We shall use the following standard normal form for finite-dimensional
	representations of full matrix rings. Let $\rho:M_n(\FF)\to M_m(\KK)$
	be a ring homomorphism. Then there are nonnegative integers $r,t$ with
	$nr+t=m$, an invertible matrix $S\in \GL_m(\KK)$, and, if $r>0$, a unital ring
	homomorphism $\omega:\FF\to M_r(\KK)$ such that
	\begin{equation}\label{eq:Morita-equivalence}
		S\rho(X)S^{-1}=\omega_n(X)\oplus 0_t,
		\qquad X\in M_n(\FF).
	\end{equation}
	Here $\omega_n$ is the \emph{$n$-fold amplification of $\omega$}: for
	$X=[x_{ij}]\in M_n(\FF)$,
	\[
	\omega_n(X):=[\omega(x_{ij})]\in M_n(M_r(\KK))\cong M_{nr}(\KK).
	\]
	In particular, $\omega_n:M_n(\FF)\to M_{nr}(\KK)$ is a ring homomorphism.
	If $r=0$, then $\rho=0$ and $t=m$; if $\rho$ is unital, then $t=0$.
	
	This is the familiar matrix-ring form of Morita equivalence
	\cite[Section~17B]{LamLectures}. In the same-field case $\KK=\FF$, this
	normal form is stated explicitly in
	\cite[Theorem~2.2]{LiTsaiWangWong}. For different codomain fields, the same
	matrix-unit proof applies verbatim: the diagonal matrix units of
	$M_n(\FF)$ decompose the unital part of the representation into $n$
	isomorphic $\KK$-vector spaces, and the action of the corner algebra $E_{11}M_n(\FF)E_{11}\cong \FF$ on one of these spaces gives the coefficient homomorphism $\omega:\FF\to M_r(\KK)$.
	
	\begin{proof}[Proof of Theorem~\ref{thm:matrix}]
		Apply Theorem~\ref{thm:associative} to $\phi$. Set
		$P:=\phi(0)$ and $\psi:=\phi-P$. Then $P$ is an idempotent,
		$\psi$ takes its values in the corner $(I_k-P)M_k(\KK)(I_k-P)$, and, with
		$Q:=\psi(I_n)$, there is an idempotent $R$ in the subring of
		$QM_k(\KK)Q$ generated by $\psi(M_n(\FF))$, central in that subring, such
		that
		\[
		\theta(X):=R\psi(X),
		\qquad
		\eta(X):=(Q-R)\psi(X), \qquad X \in M_n(\FF),
		\]
		are respectively a unital ring homomorphism into $RM_k(\KK)R$ and a unital
		ring antihomomorphism into $(Q-R)M_k(\KK)(Q-R)$, and
		\[
		\phi(X)=P+\theta(X)+\eta(X),
		\qquad X\in M_n(\FF).
		\]
		Since $\psi$ takes its values in $(I_k-P)M_k(\KK)(I_k-P)$, we have
		$PQ=QP=0$. Also $R\in QM_k(\KK)Q$, so $QR=RQ=R$. Hence $P$, $R$ and
		$Q-R$ are pairwise orthogonal idempotents. Thus, after a
		similarity over $\KK$, we may decompose
		\[
		\KK^k=\operatorname{im}P\oplus\operatorname{im}R\oplus\operatorname{im}(Q-R)\oplus\ker(P+Q).
		\]
		Relative to this decomposition,
		\[
		\phi(X)=I_s\oplus\theta(X)\oplus\eta(X)\oplus0_{t},
		\qquad X\in M_n(\FF),
		\]
		for some nonnegative integers $s$ and $t$, with summands of size $0$
		omitted. After identifying the corners
		$RM_k(\KK)R$ and $(Q-R)M_k(\KK)(Q-R)$ with full matrix algebras over $\KK$,
		we apply \eqref{eq:Morita-equivalence} to the homomorphism $\theta$. This
		homomorphism is unital with respect to the identity $R$ of its corner, since
		$\theta(I_n)=R$. We also apply \eqref{eq:Morita-equivalence} to the
		homomorphism
		\[
		M_n(\FF)\to (Q-R)M_k(\KK)(Q-R),
		\qquad X\mapsto \eta(X^{\mathsf T}),
		\]
		which is unital with respect to the identity $Q-R$ of its corner. Combining
		the resulting block similarities gives
		\[
		S\phi(X)S^{-1}
		=
		I_s\oplus\omega_n(X)\oplus\sigma_n(X^{\mathsf T})\oplus0_t,
		\qquad X\in M_n(\FF),
		\]
		for some nonnegative integers $r,u$ satisfying $s+nr+nu+t=k$, where
		$\omega:\FF\to M_r(\KK)$ and $\sigma:\FF\to M_u(\KK)$ are unital ring
		homomorphisms when the corresponding summands are present.
		
		The converse and the assertions about $\phi(0)$ and unitality follow immediately from this form.
	\end{proof}
	
	\begin{corollary}\label{cor:char-mismatch}
		If $\chr(\FF)\ne\chr(\KK)$, then every Jordan multiplicative map
		$M_n(\FF)^+\to M_k(\KK)^+$, $n\ge2$, is constant idempotent-valued.
	\end{corollary}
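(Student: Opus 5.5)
Apply Theorem~\ref{thm:matrix} to the given Jordan multiplicative map $\phi:M_n(\FF)^+\to M_k(\KK)^+$. This yields the normal form \eqref{eq:normal} with nonnegative integers $s,r,u,t$ satisfying $s+nr+nu+t=k$. We show that necessarily $r=u=0$. Once this is established, \eqref{eq:normal} reduces to $S\phi(X)S^{-1}=I_s\oplus0_t$ for all $X$. Hence $\phi$ is constant, and its value $S^{-1}(I_s\oplus 0_t)S$ is an idempotent.

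Suppose instead that $r>0$. Then Theorem~\ref{thm:matrix} provides a unital ring homomorphism $\omega:\FF\to M_r(\KK)$. Let $p:=\chr(\FF)$, allowing $p=0$. The key point is that a unital ring homomorphism maps $m\cdot1_\FF$ to $m\cdot I_r$ for every integer $m$. So $\omega$ induces a unital ring homomorphism from the prime subring of $\FF$ into $M_r(\KK)$.

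We then split into cases.
\begin{itemize}
\item If $p>0$, then $0=\omega(p\cdot1_\FF)=p\cdot I_r$. This forces $p\cdot 1_\KK=0$ because $r\ge1$. Hence $\chr(\KK)=p$.
\item If $p=0$, let $q:=\chr(\KK)$ and suppose $q>0$. Then $q\cdot 1_\FF$ is a nonzero element of the field $\FF$, so it is invertible. Its image $\omega(q\cdot1_\FF)=q\cdot I_r=0$ would then be invertible in $M_r(\KK)$, which is impossible since $r\ge1$. Hence $\chr(\KK)=0=\chr(\FF)$.
\end{itemize}
Both cases contradict $\chr(\FF)\ne\chr(\KK)$, so $r=0$. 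The same argument applied to $\sigma:\FF\to M_u(\KK)$ gives $u=0$.

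There is no real obstacle here. The only point needing care is the case $\chr(\FF)=0$. In that case the nonzero integer multiple $q\cdot1_\FF$ must be treated as a unit of $\FF$, which gives the contradiction $0=\omega(q\cdot1_\FF)$ with $\omega(q\cdot1_\FF)$ invertible. Alternatively, one could note directly that the image of $\omega$ is a nonzero homomorphic image of a field, so $\omega$ is injective and $\FF$ embeds into $M_r(\KK)$. The prime subring of $\FF$ then sits inside the commutative subring $\KK\cdot I_r$, and this forces the characteristics to agree.
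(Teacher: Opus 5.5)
Your proof is correct and follows essentially the paper's argument. You apply Theorem~\ref{thm:matrix}, show that a coefficient homomorphism $\FF\to M_r(\KK)$ with $r\ge1$ would force $\chr(\FF)=\chr(\KK)$, and conclude $r=u=0$, so that $S\phi(X)S^{-1}=I_s\oplus0_t$. The only cosmetic difference is that the paper merges your two cases into the single equivalence $\ell 1_\FF=0\iff \ell I_r=0$ via injectivity of the unital homomorphism, which is exactly the alternative you mention at the end.
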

	
	\begin{proof}
		By Theorem~\ref{thm:matrix}, a nonconstant map would have a coefficient
		homomorphism $\tau:\FF\to M_r(\KK)$ for some $r\ge1$.  Since $\tau$ is unital, it is injective. For every positive integer $\ell$, we have $\tau(\ell 1_\FF)=\ell I_r$, and consequently
		\[
		\ell 1_\FF=0
		\quad\Longleftrightarrow\quad
		\ell I_r=0,
		\]
		where the reverse implication uses injectivity of $\tau$.  Thus $\chr(\FF)=\chr(M_r(\KK))=\chr(\KK)$, contrary to the hypothesis. Hence no coefficient homomorphism can occur in the normal form
		\eqref{eq:normal}.  Therefore $r=u=0$, and
		$S\phi(X)S^{-1}=I_s\oplus0_t$ for every $X\in M_n(\FF)$.  Thus $\phi$ is
		constant idempotent-valued.
	\end{proof}
	
	For fields $\FF$ and $\KK$, let $d_\KK(\FF)$ be the least integer $r\ge1$ for
	which there exists a unital ring homomorphism $\FF\to M_r(\KK)$; if no such
	$r$ exists, set $d_\KK(\FF)=\infty$.
	
	\begin{corollary}\label{cor:threshold}
		Let $\FF$ and $\KK$ be fields of characteristics different from $2$, and let
		$n\ge2$ and $k\ge1$. There exists a nonconstant Jordan multiplicative map
		$M_n(\FF)^+\to M_k(\KK)^+$ if and only if $k\ge n d_\KK(\FF)$, where the
		inequality is understood to be false when $d_\KK(\FF)=\infty$. Equivalently,
		if $k<n d_\KK(\FF)$, then every Jordan multiplicative map
		$M_n(\FF)^+\to M_k(\KK)^+$ is constant idempotent-valued.
	\end{corollary}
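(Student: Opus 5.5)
Both directions follow directly from the normal form in Theorem~\ref{thm:matrix}, read together with its converse.

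\emph{Sufficiency.} Suppose $k\ge n\,d_\KK(\FF)$. Then $d:=d_\KK(\FF)<\infty$, so there is a unital ring homomorphism $\omega:\FF\to M_d(\KK)$. Put $r:=d$, $s:=u:=0$ and $t:=k-nd\ge0$, and define
\[
\phi(X):=\omega_n(X)\oplus 0_t,\qquad X\in M_n(\FF).
\]
By the converse part of Theorem~\ref{thm:matrix}, $\phi$ is Jordan multiplicative. It is nonconstant, since $\phi(0)=0$ while $\phi(I_n)=I_{nd}\oplus0_t\ne0$ because $nd\ge 2d\ge 2>0$. One could also verify this directly: $\omega_n$ is a ring homomorphism, so it preserves $X\circ Y=\frac12(XY+YX)$, using $\omega(1/2)=\frac12 I_d$ in characteristic $\ne2$.

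\emph{Necessity.} Let $\phi$ be nonconstant and Jordan multiplicative, and write it in the normal form \eqref{eq:normal}. If $r=u=0$, then $S\phi(X)S^{-1}=I_s\oplus0_t$ for every $X$, so $\phi$ would be constant. Hence $r>0$ or $u>0$. If $r>0$, then $\omega:\FF\to M_r(\KK)$ is a unital ring homomorphism, so $d_\KK(\FF)\le r$ by minimality; in particular $d_\KK(\FF)<\infty$. The case $u>0$ is identical, using $\sigma$. In either case
\[
k=s+nr+nu+t\ge n\max(r,u)\ge n\,d_\KK(\FF).
\]

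The equivalent formulation is just the contrapositive of necessity: every Jordan multiplicative map is, by Theorem~\ref{thm:general}, either constant idempotent-valued or nonconstant, and the second option is excluded when $k<n\,d_\KK(\FF)$. No step here is a real obstacle. The only points to watch are that ``nonconstant'' forces a nonzero coefficient block, which holds because the $I_s$ and $0_t$ summands do not depend on $X$, and the convention that the inequality is false when $d_\KK(\FF)=\infty$.
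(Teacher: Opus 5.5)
Your proof is correct and follows essentially the same route as the paper's. For necessity, a nonconstant map forces a homomorphic or antihomomorphic block in the normal form \eqref{eq:normal}, whose coefficient size is at least $d_\KK(\FF)$; for sufficiency, you use the amplified map $X\mapsto\omega_n(X)\oplus 0_{k-nd}$ built from a unital homomorphism $\omega:\FF\to M_d(\KK)$ with $d=d_\KK(\FF)$.
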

	
	\begin{proof}
		If a nonconstant map exists, then at least one homomorphic or
		antihomomorphic block occurs in \eqref{eq:normal}. Hence a coefficient
		homomorphism occurs, its coefficient size is at least $d_\KK(\FF)$ by
		definition, and therefore $k\ge n d_\KK(\FF)$. Conversely, if
		$d:=d_\KK(\FF)<\infty$ and $k\ge nd$, choose a unital homomorphism
		$\theta:\FF\to M_d(\KK)$. Then
		$X\mapsto\theta_n(X)\oplus0_{k-nd}$ is a nonconstant Jordan multiplicative
		map. This proves the equivalence, and the final assertion follows.
	\end{proof}
	
	\begin{remark}\label{rem:coefficients}
		The coefficient homomorphisms in Theorem~\ref{thm:matrix} have the usual
		module-theoretic interpretation. If $\chr(\FF)\ne\chr(\KK)$, no such
		homomorphism exists. If $\FF$ and $\KK$ have common prime subfield $\PP$, then, for each $r\ge1$, similarity classes of unital homomorphisms $\FF\to M_r(\KK)$ are in natural
		bijection with isomorphism classes of unital $\KK\otimes_{\PP}\FF$-modules of
		$\KK$-dimension $r$, where $\KK$ acts through the first tensor factor. This is
		the standard passage between module structures and change of basis; see,
		e.g., \cite[pp.~199, 201--202]{Morrison}. Namely, a unital homomorphism
		$\rho:\FF\to M_r(\KK)$ defines the action
		\[
		(a\otimes x)v:=a\,\rho(x)v,
		\qquad a\in\KK,\ x\in\FF,\ v\in\KK^r.
		\]
		Conversely, the action of $1\otimes x$ gives a unital homomorphism
		$\FF\to\End_\KK(V)$; choosing a $\KK$-basis identifies this with a
		homomorphism $\FF\to M_r(\KK)$, and a change of basis conjugates it.
		
		Suppose that $\FF/\PP$ is finite separable. Choose a primitive element $\alpha\in\FF$ \cite[Chapter~V, Theorem~4.6]{LangAlg}, let $m_\alpha(x)\in\PP[x]$ be its minimal polynomial, and write $m_\alpha(x)=g_1(x)\cdots g_s(x)$ over $\KK$, where the $g_i$ are distinct
		monic irreducible polynomials. Then the standard scalar-extension and Chinese-remainder decompositions give
		\[
		\KK\otimes_{\PP}\FF
		\cong
		\KK[x]/(m_\alpha(x))
		\cong
		\prod_{i=1}^s \KK[x]/(g_i(x));
		\]
		see \cite[Chapter~XVI, Exercise~2, and Chapter~II, Corollary~2.2]{LangAlg}. Therefore $d_\KK(\FF)=\min_{1\le i\le s}\deg g_i$.
		
		For finite fields, let $\FF=\mathbb F_{p^a}$ and $\KK=\mathbb F_{p^b}$, 
		with $p$ odd. Choose $\alpha\in\mathbb F_{p^a}$ such that 
		$\mathbb F_{p^a}=\mathbb F_p(\alpha)$, and let $m_\alpha\in\mathbb F_p[x]$ 
		be its minimal polynomial, of degree $a$. By \cite[Theorem~3.46]{LidlNiederreiter}, the polynomial $m_\alpha$ factors over $\mathbb F_{p^b}$ into $\gcd(a,b)$ distinct irreducible factors, each of degree $a/\gcd(a,b)$. Therefore $d_{\mathbb F_{p^b}}(\mathbb F_{p^a})=a/\gcd(a,b)$. By
		Corollary~\ref{cor:threshold}, nonconstant Jordan multiplicative maps
		$M_n(\mathbb F_{p^a})^+\to M_k(\mathbb F_{p^b})^+$ exist if and only if
		$k\ge n a/\gcd(a,b)$.
	\end{remark}
	
	\begin{remark}\label{rem:exceptional-and-other-domains}
		It is natural to ask whether analogous rigidity statements hold for other
		finite-dimensional simple Jordan algebras. The proof given here uses two
		features specific to $M_n(\FF)^+$: the Peirce calculation in the upper-left
		copy of $M_2(\FF)^+$ and Theorem~\ref{thm:GKT}, which identifies the
		semigroup generated by the Jordan multiplication operators on $M_n(\FF)^+$ with the full endomorphism semigroup $\End_\FF(M_n(\FF))$. We plan to address these questions in subsequent
		work.
	\end{remark}

	\section*{Funding and AI-use Disclosure}
	This research was supported by the European Union -- NextGenerationEU through the National Recovery and Resilience Plan 2021--2026 Institutional grants of University of Zagreb Faculty of Science (IK IA 1.1.3. Impact4Math, PMF-CROFUND).
	
	M.K. was supported by the Croatian Science Foundation under the project no.\ IP-2022-10-5008 (TEBAG) and acknowledges support from the project “Implementation of cutting-edge research and its application as part of the Scientific Center of Excellence for Quantum and Complex Systems, and Representations of Lie Algebras”, Grant No.\ PK.1.1.10.0004, co-financed by the European Union through the European Regional Development Fund -- Competitiveness and Cohesion Programme 2021-2027. 
	
	The authors disclose that OpenAI's GPT-5.5 was used for copyediting, language polishing, and preliminary bibliographic searches. The authors reviewed all AI-assisted suggestions and take full responsibility for the final manuscript.

\end{document}